\newtheorem{thm}{Theorem}[section]
\newtheorem{cor}[thm]{Corollary}
\newtheorem{lem}[thm]{Lemma}
\newtheorem{prop}[thm]{Proposition}
\theoremstyle{definition}
\newtheorem{defin}[thm]{Definition}
\theoremstyle{remark}
\newtheorem{remark}[thm]{Remark}
\newtheorem{remarks}[thm]{Remarks}
\newtheorem{example}[thm]{Example}
\newtheorem{examples}[thm]{Examples}
\numberwithin{equation}{section}
\newcommand{\delete}[1]{} % Comment out text.
\newcommand{\nt}{\noindent}
\def\eps{{\varepsilon}}
\def\s{\sigma}
\def\eps{{\varepsilon}}
\newcommand{\sk}{\vskip 0.2cm}
\newcommand{\ben}{\begin{enumerate}}
\newcommand{\een}{\end{enumerate}}
\newcommand{\bit}{\begin{itemize}}
\newcommand{\eit}{\end{itemize}}
\def\R {{\mathbb R}}
\def\N {{\mathbb N}}
\def\Z {{\mathbb Z}}
\def\F{{\mathcal F}}
\def\diam{{\mathrm{diam}}}
\newcommand{\lan}{\langle}
\newcommand{\ran}{\rangle}
\def\QED{\nobreak\quad\ifmmode\roman{Q.E.D.}\else{\rm Q.E.D.}\fi}
\begin{document}

\title[]{Median pretrees and functions of bounded variation}

\author[]{Michael Megrelishvili}
\address{Department of Mathematics,
	Bar-Ilan University, 52900 Ramat-Gan, Israel}
\email{megereli@math.biu.ac.il}
\urladdr{http://www.math.biu.ac.il/$^\sim$megereli}

\date{September 16, 2020}

\subjclass[2010]{Primary 54F50, 26A45; Secondary 52A01, 54H99}

\keywords{Baire class 1, bounded variation, dendrone, dendrite, fragmented function, Helly's selection theorem, median algebra, pretree}

\thanks{This research was supported by a grant of the Israel Science Foundation (ISF 1194/19)} 

\begin{abstract}  
	We introduce functions of bounded variation on median algebras and study some properties for median pretrees. We show that if $X$ is a compact median pretree (e.g., a dendron) in its shadow topology then every function $f\colon  X \to \R$ of  bounded variation 
	has the point of continuity property (Baire 1, if $X$, in addition, is metrizable). We prove a generalized version of Helly's selection theorem for a sequence of functions 
	with total bounded variation defined on a Polish median pretree $X$.   
\end{abstract}

\maketitle 

%\setcounter{tocdepth}{1}
% \tableofcontents

\section{Introduction}

Our aim is to introduce functions of bounded variation on median algebras and pretrees (in particular, on dendrons).   
%(see Definition \ref{d:BVnew}). 
This was motivated by  recent papers \cite{Me-Helly,GM-D} and especially by a joint work with E. Glasner \cite[Remark 4.11]{GM-D}, where we deal with some applications of median pretrees  in topological dynamics. 

\sk
In the present work we prove the following theorems (\ref{t:PretBV} and \ref{t:GenHelly} below).  
\sk 
\nt \textbf{Theorem A.} 
\textit{Let $X$ be a median pretree (e.g., dendron or a linearly ordered space) such that its natural shadow topology is compact or Polish. Then every function $f\colon X \to \R$ 
  with bounded variation has the point of continuity property (Baire 1 class function, if $X$ is Polish).}

\sk 
	
\nt 	\textbf{Theorem B.}   (Generalized Helly's selection theorem) 
	\textit{Let $X$ be a Polish  (e.g., compact metrizable) median pretree. Then every sequence $\{f_n \colon X \to [c,d]\}_{n \in \N}$ of functions with total bounded variation $\leq r$ 
	has a pointwise converging subsequence, which converges to a function with variation $\leq r$.}

 \sk 
 Recall that a topological space $X$ is said to be \textit{Polish} if it is homeomorphic to a separable complete metric space. 
 A \textit{continuum} is a compact Hausdorff connected space. 
 A continuum $D$ is said to be a \textit{dendron} \cite{Mill-Wattel} if every pair of distinct points $u,v$ can be separated in $D$ by a third point $w$. 
 A metrizable dendron is called a \textit{dendrite}. 
 The class of dendrons is an important class of 1-dimensional treelike compact spaces, \cite{Mill-Wattel,Charatoniks}. Group actions on dendrites is an attractive direction in dynamical systems theory (see \cite{DuMo,GM-D} and references therein). %Certain Julia sets are dendrites.   
 
 We define in Section 3 (Definitions \ref{d:BVnew} and \ref{d:LinBV}) functions of bounded variation on median algebras. 
 In Section \ref{s:MedPretr}, we recall definition and auxiliary properties of median pretrees. As to the \textit{point of continuity property} and \textit{fragmented functions}, see Subsection \ref{s:fr}. % and Definition \ref{d:BVnew}. 
 Note that such functions play a major role in Bourgain-Fremlin-Talagrand theory, \cite{BFT} which in turn is strongly related to the classical work of Rosenthal \cite{Ros0}. One of the results from \cite{BFT} allows us to derive Theorem B from Theorem A. 
 
 Weaker versions of these theorems for linearly ordered spaces and BV functions were proved in \cite{Me-Helly} and for median pretrees and monotone functions in \cite{GM-D}. 
 
% \textbf{Acknowledgment.} I thank to E. Glasner and  
 
 \sk
 \section{Related structures} 
 \label{s:MedPretr} 
 
 \textit{Pretree} (in terms of B.H. Bowditch) is a useful treelike structure which naturally generalizes several important structures including linear orders and the betweenness relation on dendrons.

 \subsection{Pretrees} 
 
 \begin{defin} \label{d:B} 
  By a \textit{pretree} (see for example \cite{B, Mal14}), we mean a pair $(X,R)$, where 
 $X$ is a set and $R$ is a ternary relation on $X$
 (we write $\lan a,b,c \ran$ to denote $(a,b,c) \in R$) satisfying the following three axioms:
 \begin{itemize}
 	\item [(B1)] $\lan a,b,c \ran \Rightarrow \lan c,b,a \ran$. 
 	\item [(B2)] $\lan a,b,c \ran \wedge \lan a,c,b \ran \Leftrightarrow b=c$.
 	\item [(B3)] $\lan a,b,c \ran \Rightarrow \lan a,b,d \ran  \vee \lan d,b,c \ran$.
 \end{itemize} 
 In \cite{AN} such a ternary relation is called a \textit{B-relation}.  
 \end{defin}

 It is convenient to use also an interval approach. For every $u,v \in X$ define 
 $$
 [u,v]_X:=\{x \in X: \lan u,x,v \ran\}.
 $$ 
 Sometimes we write simply $[u,v]$, where $X$ is understood.

 \begin{remark} \label{r:PretreeInt} 
 	The conditions (A0),(A1),(A2),(A3), 
 	as a system of axioms, are equivalent to the above definition via (B1), (B2), (B3)
 	(see \cite{Mal14}).  
 	In every pretree $(X,R)$ for every $a,b,c \in X$, we have 
 	\begin{itemize}
 		\item [(A0)] $[a,b] \supseteq \{a,b\}$. 
 		
 		\item [(A1)] $[a,b]=[b,a]$. 
 		\item [(A2)] If $c \in [a,b]$ and $b \in [a,c]$ then $b=c$.  
 		
 		\item [(A3)]  $[a,b] \subseteq [a,c] \cup [c,b]$. 
 			
 	%	\item [(A4)] $[a,b] = [a,c] \cup [c,b]$ for every $a,b \in X, c \in [a,b]$. 
 	%	\item [(A5)]  If $b \in [a,c]$ and $c \in [a,d]$ then $c \in [b,d]$. 
 	\end{itemize}  
 \end{remark}

\sk 
 
 Every subset $Y$ of $X$ carries the naturally defined betweenness relation. 
 In this case, the corresponding intervals are $[a,b]_Y=[a,b] \cap Y$ for every $a, b \in  Y$. 
 
 For every linear order $\leq$ on a set $X$, we have the induced pretree $(X,R_{\leq})$ 
 defined by 
 $$
 \lan a,b,c \ran \Leftrightarrow (a \leq b \leq c) \vee (c \leq b \leq a). 
 $$
 Note that the opposite linear order defines the same betweenness relation.

 A subset $A$ of a pretree $X$ is said to be \textit{convex} if $[a,b] \subset A$ for every $a,b \in A$. Intersection of convex subsets is convex (possibly empty). 
 For a subset $A \subset X$, the \textit{convex hull} $co(A)$ is the intersection of all convex subsets of $X$ which contain $A$.  
 
%  \begin{lem}   \cite[Lemma 2.8]{Mal14}
% 	Let $X$ be a pretree. Then for every subset $A \subset X$ its convex hull is $$co(A)=\cup \{[a,b]: a,b \in A\}.$$
% \end{lem}
 
 \sk  
 Let us say that $a,b,c \in X$ are \textit{collinear} if 
 $$
 a \in [b,c] \vee b \in [a,c] \vee c \in [a,b].
 $$
 A subset $Y$ of $X$ is \textit{linear} (see \cite[Section 3]{Mal14}) if all $a,b,c \in Y$ are collinear. 
 
 By a \textit{direction} on a linear subset $Y$ in a pretree $X$, we
 mean a linear order $\leq$ on $Y$ such that, $R_{\leq}$ is just the given betweenness relation on $Y$.  Each nontrivial linear subset $Y$ in a pretree $X$ admits precisely two directions.  
 %(these directions are mutually inverse)

% \begin{lem} \label{l:intLin} \cite{AN} \cite[Lemma 3.3.4]{Mal14} 
% 	Let $X$ be a pretree. Then $[a,b]$ is a convex linear subset for every $a,b \in X$. 
% \end{lem}
 
% \sk 
% For every given $a \in X$ define the following relation 
% $$c_1 \leq_a c_2 \Leftrightarrow (a,c_1,c_2) \in R.$$
% Then $X$ is semilinearly ordered with the root point $a$. So, for every $b \in X$ the interval $[a,b]$ is linearly ordered wrt $\leq_a$.  

% \sk 

 Following A.V. Malyutin \cite{Mal14} (which in turn follows to the terminology of 
 \newline P. de la Harpe and J.-P. Preaux), we define the so-called \textit{shadow topology}. Alternative names in related structures are: \textit{Lawson's topology} and \textit{observer's topology}. See the related  discussion in \cite{Mal14}. 
 %For a pretree $(X,R)$ define the topology  on 
 %In order to define $\tau_s$ for $(X,R)$  
 
 Given an ordered pair $(u,v) \in X^2, u \neq v$, 
 let 
 $$
 S^v_u:=\{x \in X: u \in [x,v]\}
 $$
 be the \textit{shadow} in $X$ defined by the ordered pair $(u,v)$. 
 Pictorially, the shadow $S^v_u$ is cast by a point $u$ when the light source is located at the point $v$. 
 The family $\mathcal{S} = \{S^v_u: u,v \in X, u \neq v\}$ is a subbase for the closed sets
 of the topology $\tau_s$. 
 The complement of $S^v_u$ is said to be a \textit{branch}
 	$$\zeta_u^v:=X \setminus S^v_u=\{x \in X: u \notin [x,v]\}.$$
 The set of all branches $\{\zeta_u^v: u, v \in X, u \neq v\}$ is a subbase of the shadow topology. 
 
 \sk 
 In the case of a linearly ordered set, we get the \textit{interval topology}. 
 In general, for an abstract pretree, the shadow topology is 
 %frequently 
 often 
 (but not always) Hausdorff. 
 Furthermore, by \cite[Theorem 7.3]{Mal14} a pretree equipped with its shadow topology is Hausdorff 
 if and only if, as a topological space, it can be embedded into a dendron. 
 %3009 PROMOTION  Hence, every Hausdorff pretree (e.g., linearly ordered topological space) is WRN by Corollary \ref{c:DisWRN}. 
 
%!!!!!!  The following formula in (2) reflects the fact that Caratheodory number of pretrees is at most 2 (see more details in \cite{Vel}).  
 
 \begin{lem} \label{l:PretrProp}  	Let $X$ be a pretree. 
 	\begin{enumerate}
 		\item \cite[Lemma 1.16 (A6,A7)]{Mal14} \ For every	$c \in [a,b]$ we have: 
 		\begin{enumerate}
 		\item 	$[a,c] \cap [c,b]=\{c\};$
 		\item $[a,c] \cup [c,b]=[a,b].$ %$[a,c] \subseteq [a,b], \ \ \ [a,c] \subseteq [a,b]$
 		\end{enumerate}

 		\item \label{l:ConvH}  \cite[Lemma 2.8]{Mal14}
 	For every subset $A \subset X$ its convex hull is 
 	$$co(A)=\cup \{[a,b]: a,b \in A\}.$$
 		\item  \cite[Lemma 3.3.4]{Mal14} 
 		$[a,b]$ is a convex linear subset for every $a,b \in X$. 
 		%Hence, $x,y \in [a,b] \Rightarrow [x,y] \subset [a,b]$. 
 		\item 	\label{l:LocConv} 
 		\cite[Lemma 5.10.2]{Mal14} Every branch is convex. Hence, every pretree is \textit{locally convex}. 
 		\item \cite[Prop. 6.5]{Mal14}  Let $S$ be a subset in a pretree $X$. Then the shadow topology on $S$ (regarded as a pretree with the structure induced by that of $X$) is contained in the relativization of the shadow topology on $X$ to $S$. If $S$ is convex in $X$, then the two topologies above coincide.
 		
 	\end{enumerate} 
 \end{lem}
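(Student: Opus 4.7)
The plan is to derive each assertion directly from the pretree axioms (B1)--(B3) (equivalently (A0)--(A3)), primarily via case analysis on (B3) together with the anti-symmetry axiom (B2). Parts (1)--(4) are combinatorial consequences of these axioms, while part (5) is topological and concentrates the real content.

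For part (1), given $c \in [a,b]$ and $x \in [a,c] \cap [c,b]$, I would apply (B3) to $\lan a,c,b \ran$ with fourth element $x$ to obtain $\lan a,c,x \ran$ or $\lan x,c,b \ran$. Combined respectively with $\lan a,x,c \ran$ or $\lan c,x,b \ran$, axiom (B2) forces $x = c$. Since $c$ lies in both intervals by (A0), this yields (1a). For (1b), one inclusion is (A3); for the other, I show $[a,c] \subseteq [a,b]$ whenever $c \in [a,b]$ by applying (B3) to $\lan a,x,c \ran$ with fourth point $b$ and using (B2) to collapse the degenerate case.

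Part (2) reduces to showing that $C := \bigcup_{a,b \in A}[a,b]$ is convex, since the reverse inclusion $C \subseteq co(A)$ is immediate. Given $x \in [a_1,b_1]$, $y \in [a_2,b_2]$ with $a_i,b_i \in A$, I would use (A3) and part (1b) to decompose an arbitrary point of $[x,y]$ into an interval whose endpoints lie among $\{a_1,b_1,a_2,b_2\}$. Part (3) then follows: convexity of $[a,b]$ is essentially the content of (1b), while linearity — collinearity of any three $x,y,z \in [a,b]$ — is a direct (B3) manipulation (apply the axiom to $\lan a,x,b \ran$ with fourth point $z$ and analyze the resulting disjunction).

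For part (4), assume for contradiction that $x, y \in \zeta_u^v$, $z \in [x,y]$, and $u \in [z,v]$. Applying (B3) to $\lan z,u,v \ran$ twice, first with fourth point $x$ and then with $y$, the branch condition $x, y \in \zeta_u^v$ rules out one alternative in each case and forces $u \in [z,x] \cap [z,y]$. By (1a) applied to $z \in [x,y]$, this intersection equals $\{z\}$, so $u = z \in [x,y]$; a final application of (B3) to $\lan x,u,y \ran$ with fourth point $v$ then contradicts one of the hypotheses $x, y \in \zeta_u^v$. The main obstacle lies in (5): for $u,v \in S$, convexity gives $[x,v]_S = [x,v]_X$ for $x \in S$, so $S_u^v(S) = S_u^v(X) \cap S$, which handles the easy inclusion. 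The delicate direction is to express the trace $\zeta_u^v(X) \cap S$ as an open set of the intrinsic shadow topology of $S$ when $u$ or $v$ lies outside $S$. My plan is to use convexity of $S$ to replace an external endpoint by a suitable ``first entry point'' of the interval $[u,v]_X$ into $S$, which should either collapse the trace to $\emptyset$ or to all of $S$, or else reduce it to an $S$-subbasic branch. This reduction, which genuinely requires convexity of $S$ rather than the mere fact that $S$ inherits a pretree structure, is where I expect the main difficulty.
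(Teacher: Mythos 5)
First, a point of comparison: the paper offers no proof of this lemma at all --- each of the five items is quoted from \cite{Mal14} with a precise reference --- so your from-scratch derivation is not competing with an argument in the text. Your treatment of parts (1)--(4) is correct and is essentially the standard derivation from (B1)--(B3): the (B3)-plus-(B2) collapse for (1a), the reduction of (1b), of the convexity of $co(A)$ and of the convexity/linearity of $[a,b]$ to repeated uses of (A3) and (1b), and the branch-convexity argument in (4) (which correctly uses (1a) to force $u=z$ and then one more application of (B3)) all go through as you describe. One small remark on the ``easy'' half of (5): the containment of the intrinsic topology in the relative one is asserted for \emph{arbitrary} $S$, and indeed needs no convexity, since for $u,v,x\in S$ one has $u\in[x,v]_S\Leftrightarrow u\in[x,v]_X$ simply because $[x,v]_S=[x,v]_X\cap S$ and $u\in S$.

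The genuine gap is in the hard direction of part (5). Your plan to replace an external endpoint of a shadow by a ``first entry point'' of $[u,v]_X$ into $S$ breaks down because such a point need not exist: $[u,v]_X\cap S$ is a convex subset of the linearly ordered set $[u,v]_X$ and may have no extremal element (e.g.\ $X=\R$, $S=(0,1)$, $u=\tfrac12$, $v=2$, where $[u,v]\cap S=[\tfrac12,1)$). Your proposed trichotomy (trace collapses to $\emptyset$, to all of $S$, or to one subbasic branch) is also incomplete: when $u\in S$ and $[u,v]\cap S=\{u\}$ the trace of the shadow can be a proper nonempty closed subset that is not a single subbasic set. A correct route that avoids first entry points: (i) if $u\notin S$, then for $x,x'\in S$ the inclusion $[x,v]\subseteq[x,x']\cup[x',v]$ from (A3), together with $[x,x']\subseteq S\not\ni u$, shows that $u\in[x,v]$ implies $u\in[x',v]$, so $S_u^v\cap S$ is $\emptyset$ or all of $S$; (ii) if $u\in S$, then for each $x\in S$ with $u\notin[x,v]$ the intrinsic branch $\zeta_u^x(S)$ contains $x$ and is disjoint from $S_u^v$ (apply (B3) to $\lan y,u,v\ran$ with fourth point $x$ and rule out both alternatives), so $S\setminus(S_u^v\cap S)$ is a union of intrinsic subbasic open sets. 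This uses convexity exactly where it is needed, namely in step (i).
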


 \sk
 \subsection{Median algebras and pretrees}

 A {\it median algebra} (see, for example, \cite{Vel, B}) is a pair $(X,m)$, 
 where the function $m\colon  X^3 \to X$ satisfies the following three axioms:
 \begin{itemize}
 	\item [(M1)]  $m(x,x,y)=x$.  
 	\item [(M2)]  $m(x,y,z)=m(y,x,z)=m(y,z,x)$.
 	\item [(M3)]  $m(m(x,y,z),u,v)=m(x,m(y,u,v),m(z,u,v)).$
 \end{itemize}
 
 This concept has been studied for a long time (Birkhoff-Kiss, Grau, Isbell) and has applications in abstract convex structures, \cite{Vel}. 
 
 Every distributive lattice $(L,\wedge,\vee)$  (e.g., any power set $P(S):=\{A: A \subset S\}$) is a median algebra with the median operation
 %2808  pretree (with the pretree structure $\lan u, w, v \ran$ whenever one of the following conditions is satisfied: $w = u \wedge v, w \leq u, w \leq v$). Such a pretree has the median:   
 $$
 m(a,b,c):=(a \wedge b) \vee (b \wedge c) \vee (c \wedge a). 
 $$
 A very particular case of this is a linearly ordered set. 
 
 Let $(X,m)$ be a median algebra. A subset $Y \subseteq X$ is a \textit{subalgebra} if it is median-closed in $X$. 
 In a median algebra $(X,m)$ for every subset $A$, there exists the subalgebra $sp(A)$ generated by $A$.   %3006 !!!!!!!! which is defined as $sp(A):=\{x \in X: x=m(a,b,c), a,b,c \in A\}$.  
 This is the intersection of all subalgebras containing $A$.

 In every median algebra $(X,m)$, we have the naturally defined intervals 
 $$
 [a,b]:=\{m(a,x,b): x \in X\}.
 $$
This leads to the natural ternary relation $R_m$ defined by  
$\lan a,c,b \ran \ \text{iff} \ c=m(a,c,b),$ 
equivalently $\lan a,c,b \ran \ \text{iff} \ c \in [a,b]$. 
Note that not every median algebra is a pretree under the relation $R_m$. 
% \exists y\in Y \  m(a,y,b) \in Y \Rightarrow  y \in \{a,b\}.$$ 
%	 That is, the open road between $a,b$ in $Y$ is empty $$Y \cap ([a,b]_R \setminus \{a,b\})= (a,b)_Y =\emptyset.$$
A subset $C$ of a median algebra is \textit{convex} if $[a,b] \subset C$ for every $a,b \in C$. 
Every convex subset is a subalgebra. 

For every triple $a,b,c$ in a pretree $X$ the {\it median} $m(a,b,c)$ is the intersection 
$$
m(a,b,c):=[a,b] \cap [a,c] \cap [b,c]. 
$$
When it is nonempty the median is a singleton, \cite{B,Mal14}. 
A pretree $(X,R)$ for which this intersection is always nonempty is called a \textit{median pretree}.

 \begin{remarks} \label{r:med} \ 
 	\ben 
 	\item Every median pretree $(X,R)$ is a \textit{median algebra}. The corresponding ternary relation $R_m$ induced by the median function coincides with $R$.

 	%(for the axioms of median algebra see \cite[p. 14]{B} or, \cite{Vel}). 
 	\item A map $f\colon  X_1 \to X_2$ between two median algebras is \textit{monotone} 
 	(i.e., $f[a,b] \subset [f(a),f(b)]$) if and only if $f$ is \textit{median-preserving} (\cite[page 120]{Vel}) if and only if $f$ is \textit{convex} (\cite[page 123]{Vel}) (convexity of $f$ means that the preimage of a convex subset is convex). 
 	
 	\item Every median pretree is Hausdorff (and normal) in its shadow topology (\cite[Theorem 7.3]{Mal14}). 
 	\item \cite[Prop. 6.7]{Mal14} In a median pretree, the convex hull of a closed 
 	set is closed. In particular, the intervals $[a,b]$ are closed subsets.
 	\item It is a well-known (nontrivial) fact that for every finite subset $F \subset X$ in a median algebra the induced subalgebra $sp(F)$ is \textit{finite}, \cite{Vel}. 
 	\een 
 \end{remarks}

 A \textit{compact (median) pretree} is a (median) pretree $(X,R)$ for which the shadow topology  $\tau_s$ is compact. \textit{Polish pretrees} can be defined similarly.

 %The following examples of median pretrees can be found in \cite{Mal-14}. 
 \begin{examples} \label{ex:median} \ 
 	\ben 
 	\item 
 	Every dendron $D$ is a compact median pretree with respect to the standard betweenness relation $R_B$ ($w$ is \textit{between} $u$ and $v$ in $X$ if $w$ separates $u$ and $v$ or if $w \in \{u,v\}$). 
 	Its shadow topology is just the given compact Hausdorff topology on $D$ (see \cite{Mill-Wattel, Mal14}). 
 	%!!! Every dendron is a compact locally convex median algebra. 
 	
 	%A pseudotree is a partially ordered set such that every $a_{\downarrow}: =\{x \in X: x \leq a\}$ is linearly ordered.
 	
 	\item Every linearly ordered set $(L,\leq)$ is a median pretree with respect to the median $m_{\leq}(a,b,c)=b$ iff $a \leq b \leq c$ or $c \leq b \leq a$. Its shadow topology is  the usual \textit{interval topology} of the order. We say that a subset $Y$ of a median algebra $(X,m)$ is a \textit{linear subset} if there exists a linear order $\leq$ on $Y$ such that the induced median function $m_{\leq}$ and the restriction of $m$ agree on $Y$.   
% 		\item  Every meet-semilattice is a 
% 		%2808  pretree (with the pretree structure $\lan u, w, v \ran$ whenever one of the following conditions is satisfied: $w = u \wedge v, w \leq u, w \leq v$). Such a pretree has the median:  
% 		median algebra with the median  
% 		$$
% 		m(a,b,c):=\max\{a\wedge b, b \wedge c, c \wedge a\}. 
% 		$$
% 		%Very particular case of this is a linearly ordered set. 
 	\item Let $X$ be a \textit{$\Z$-tree} (a median pretree with finite intervals $[u,v]$). 
 	Denote by $Ends(X)$  the set of all its \textit{ends}. 
 	According to \cite[Section 12]{Mal14} the set $X \cup Ends(X)$ carries a natural 
 	$\tau_s$-\textit{compact} median pretree structure.
 	\een
 \end{examples}

%!!!!!!! 
% \begin{lem} \label{l:propPchains}  \
% 	 A subset $Y \subseteq X$ of a median pretree $X$ is a chain iff  any triple $a,b,c \in Y$ is $\mathcal{M}$-closed. That is, if $T(Y)=0,$ where 
% 	$$T(Y)=card \{(a,b,c) \in Y^3: \ m(a,b,c) \notin \{a,b,c\}\}.$$ 
% 
% \end{lem}
% 

 \sk 
 \subsection{Fragmented functions} 
 \label{s:fr} 
 
 Recall the definition of fragmentability which comes from Banach space theory \cite{JR,N,JOPV} and effectively used also in dynamical systems theory \cite{Me-nz,GM1,GM-survey}. 
 We give only the case of functions into metric spaces. Lemma \ref{l:FinUnion} is true also where 
 the codomain is a uniform space. 
 
 \begin{defin}
 	Let $f\colon (X,\tau) \to (M,d)$ be a function from a topological space into a metric space. 
 	We say that $f$ is \textit{fragmented} if for every nonempty subset $A \subset X$ and every $\eps >0$ there exists a $\tau$-open subset $O \subset X$ such that $O \cap A$ is nonempty and $\diam(f(O \cap A)) < \eps$.  
 	%Notation: $f \in {\mathcal F}(X,M)$, whenever the metric $d$ is understood.
 	If $M=\R$ then we use the notation  $f \in {\mathcal F}(X)$.
 \end{defin}
 
 %Note also that any function $f: [a,b] \to [c,d]$ with bounded variation is Baire 1 class function. This well known fact can be generalized in terms of \textit{fragmented} functions $f: K \to \R$ defined on compact linearly ordered space $K$.
 
 %!!!!!!!!!!!!!
 %In Definition \ref{def:fr}.1 when $Y=X, f={id}_X$ and $\mu$ is a
 %metric uniform structure, we get the usual definition of
 %fragmentability (more precisely, $(\tau,\mu)$-fragmentability) in the sense of Jayne and Rogers \cite{JR}.
 %Implicitly it already appears in a paper of Namioka and Phelps \cite{NP}.

 \begin{lem} \label{l:fr} \  %\cite{GM1, GM-rose}
 	\ben
 	%	\item It is enough to check the condition of Definition \ref{def:fr}
 	%	only for closed subsets $A \subset X$ and
 	%	%!use the fact that if $O \cap cl(A))$ is nonempty then $O \cap A$ is also nonempty ...
 	%	for $\eps \in \mu$ from a {\it subbase} $\gamma$ of $\mu$ (that is,
 	%	the finite intersections of the elements of $\gamma$ form a base
 	%	of the uniform structure $\mu$).
 	\item \cite{GM1} 
 	When $X$ is compact or Polish, then $f\colon X \to \R$ is fragmented iff $f$ has the 
 	\emph{point of continuity property} (i.e., for every closed
 	nonempty $A \subset X$ the restriction $f|_{A}\colon A \to \R$ has a continuity point).
 	\item \cite[p. 137]{Dulst} For every Polish space $X$, we have $\F(X)=B_1(X)$, where $B_1(X)$ is the set of all Baire 1 functions $X \to \R$.  
 	\item \cite[Lemma 3.7]{Dulst} Let $X$ be a compact or a Polish space. Then the following conditions are equivalent for a function $f\colon X \to \R$. 
 	\begin{enumerate}
 		\item $f \notin \F(X)$; 
 		\item there exists a closed %(necessarily infinite) 
 		subspace $Y \subset X$ and real numbers $\alpha < \beta$ such that the subsets 
 		$f^{-1}(-\infty,\alpha) \cap Y$ and $f^{-1} (\beta,\infty) \cap Y$ are dense in $Y$. 
% 		That is, 
% 		$cl(f^{-1}(-\infty,\alpha) \cap Y)=	cl(f^{-1} (\beta, \infty) \cap Y)=Y.$ 
 			\end{enumerate}
 		\item \cite[Section 3]{BFT} For every Polish space $X$, every pointwise compact subset of $B_1(X)$ is sequentially compact (see also \cite[Thm 3.13]{Dulst}). 
 	\een
 \end{lem}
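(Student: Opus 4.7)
The lemma consolidates four known facts drawn from different sources, so my plan is to sketch a compact argument (or a pointer) for each part in turn, treating part (4) as a black box.

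For part (1), I would use a standard Baire category argument. Fix a closed nonempty $A \subseteq X$. For each $n \in \N$, let $V_n \subseteq A$ be the set of points admitting a relatively open neighborhood on which $f$ has oscillation less than $1/n$. The fragmentability hypothesis, applied to every nonempty relatively open subset of $A$, shows that $V_n$ is open and dense in $A$; since $A$ is Polish (or compact Hausdorff), it is Baire and $\bigcap_n V_n$ is a dense $G_\delta$ of continuity points of $f|_A$. The converse is immediate: given a nonempty $A \subseteq X$ and $\eps > 0$, apply PCP to $\overline{A}$ to obtain a continuity point $x$ of $f|_{\overline{A}}$, and then a small open neighborhood $O$ of $x$ witnesses fragmentability on $A$ because $O \cap A$ is nonempty and $f(O \cap A) \subseteq f(O \cap \overline{A})$.

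Part (2) is the classical Baire characterization theorem: on a Polish space a function is of Baire class $1$ exactly when its restriction to every nonempty closed subspace has a continuity point; combined with (1) this identifies $\F(X) = B_1(X)$. For part (3), my plan is a direct dichotomy. Assuming $f \notin \F(X)$, pick a witnessing pair $(A,\eps)$ with $\mathrm{diam}(f(U \cap A)) \geq \eps$ for every open $U$ meeting $A$. Cover $\R$ by countably many open intervals of length $\eps/3$; a pigeonhole-and-intersection argument, iterated through progressively smaller relatively open subsets of $\overline{A}$, produces a closed subspace $Y \subseteq \overline{A}$ and constants $\alpha < \beta$ such that both $f^{-1}((-\infty,\alpha)) \cap Y$ and $f^{-1}((\beta,\infty)) \cap Y$ are dense in $Y$. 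The reverse implication is trivial, since any such $Y$ already obstructs fragmentability at scale $(\beta-\alpha)/2$.

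Part (4) is the Bourgain--Fremlin--Talagrand theorem on sequential compactness of pointwise compact subsets of $B_1(X)$, which I would invoke as a black box; its proof passes through Rosenthal's $\ell^1$ theorem and a delicate extraction of independent subsequences. This is the only genuinely deep ingredient of the lemma, and hence the only real obstacle: I would not attempt to reprove it here, whereas parts (1)--(3) reduce to Baire category and pigeonhole arguments once organized carefully.
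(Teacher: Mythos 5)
This lemma is stated in the paper without proof: all four parts are quoted verbatim from the literature ([GM1], [Dulst, p.~137 and Lemma 3.7], and [BFT]), so there is no in-paper argument to compare against. Your proposal is consistent with that and, where you do supply arguments, they are the standard ones from those references. Your proof of (1) is correct in both directions: the sets $V_n$ are open by definition and dense by fragmentability, a closed subspace of a compact Hausdorff or Polish space is Baire, and the converse correctly passes from an arbitrary $A$ to $\overline{A}$ (indeed that direction needs no hypothesis on $X$ at all). For (2) you rightly reduce to Baire's characterization theorem plus part (1); note that Baire's theorem is itself a nontrivial input which you are invoking rather than proving, on the same footing as (4). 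The only soft spot is the forward direction of (3): ``a pigeonhole-and-intersection argument, iterated through progressively smaller relatively open subsets'' is not yet a proof. The clean route (essentially van Dulst's) is non-iterative: given a witness $(A,\eps)$ with $A$ closed, for each rational pair $p<q$ with $q-p<\eps/3$ set $F_{p,q}=\overline{f^{-1}(-\infty,p)\cap A}\cap\overline{f^{-1}(q,\infty)\cap A}$; one checks $A=\bigcup_{p,q}F_{p,q}$ (in any relatively open $U$ meeting $A$ the image $f(U\cap A)$ has diameter $\ge\eps$, so some rational gap is straddled densely near some point), and then Baire category in $A$ forces some $F_{p,q}$ to have nonempty interior in $A$, whose closure serves as $Y$ with $\alpha=p$, $\beta=q$. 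Treating (4) as a black box is exactly what the paper does. So: correct in substance, with (3) needing the above tightening to be a complete argument.
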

 
 \begin{lem} \label{l:FinUnion}  
 	Let $f\colon (X,\tau) \to (M,d)$ be a function from a topological space into a metric space. 
 	Suppose that $X=\bigcup_{i=1}^n Y_i$ is a finite covering of $X$ such that every $Y_i$ is closed in $X$ and every restriction function $f|_{Y_i}\colon (Y_i,\tau|_{Y_i}) \to (M,d)$ is fragmented. Then $f\colon (X,\tau) \to (M,d)$ is also fragmented.  
 	\end{lem}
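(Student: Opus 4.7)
My strategy is direct: given a nonempty subset $A\subseteq X$ and $\eps>0$, I plan to produce a $\tau$-open set $O\subseteq X$ with $O\cap A$ nonempty and $\diam(f(O\cap A))<\eps$, by first trapping a relatively open piece of $A$ inside some single $Y_i$ and then invoking fragmentability of $f|_{Y_i}$ there.

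The key preliminary observation is an elementary fact about finite closed covers: if a topological space $A$ is covered by finitely many of its closed subsets $A_1,\dots,A_n$, then at least one $A_i$ has nonempty interior in $A$. I would establish this by induction on $n$. If $A_n$ has empty interior in $A$, then $A\setminus A_n$ is dense in $A$ and is contained in the $A$-closed set $A_1\cup\cdots\cup A_{n-1}$, so $A=A_1\cup\cdots\cup A_{n-1}$, and the inductive hypothesis supplies some $A_i$ with $i<n$ having nonempty interior in $A$.

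Applied to the covering $A=\bigcup_{i=1}^n (A\cap Y_i)$ of the subspace $A$ by sets closed in $A$, this yields an index $i_0$ and a $\tau$-open set $U\subseteq X$ with $\emptyset\ne U\cap A\subseteq Y_{i_0}$. Now I would apply the fragmentability of $f|_{Y_{i_0}}$ to the nonempty subset $U\cap A$ of $Y_{i_0}$: there is an open subset of $Y_{i_0}$, of the form $W\cap Y_{i_0}$ with $W$ open in $X$, meeting $U\cap A$ in a nonempty set of $f$-diameter less than $\eps$. Setting $O:=W\cap U$, the inclusion $U\cap A\subseteq Y_{i_0}$ gives $O\cap A=(W\cap Y_{i_0})\cap(U\cap A)$, which is nonempty with $\diam(f(O\cap A))<\eps$, as required.

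The only nontrivial point is the combinatorial lemma on relative interiors of finite closed covers; the rest of the argument is routine manipulation of the subspace topology on $Y_{i_0}$. The same argument works verbatim when the codomain is a uniform space, since one only needs the notion of an entourage-small set in place of $\diam <\eps$.
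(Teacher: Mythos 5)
Your proof is correct. You take a slightly different route from the paper: your key step is the elementary fact that in any finite cover of a (nonempty) space by relatively closed sets, some member has nonempty relative interior — a finite Baire-type observation proved by induction — which immediately traps a nonempty relatively open piece $U\cap A$ of $A$ inside a single $Y_{i_0}$, after which fragmentability of $f|_{Y_{i_0}}$ finishes the argument uniformly for all $n$. The paper instead reduces by induction to the case $n=2$ (using that a finite union of closed sets is closed) and then runs a two-case analysis: either $A\subseteq Y_1\cap Y_2$, so $A$ sits inside $Y_1$, or some $(A\cap Y_i)\setminus Y_j$ is nonempty, in which case the open set $Y_j^c$ is used to cut $A$ down to a relatively open piece inside $Y_i$. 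The underlying mechanism — locating a nonempty relatively open subset of $A$ contained in one $Y_i$ — is the same in both arguments; your version packages it as a clean standalone combinatorial lemma and avoids the reduction to two sets, at the cost of proving that lemma, while the paper's version is more hands-on but needs the explicit identity $(U\cap Y_1)\cap((A\cap Y_1)\setminus Y_2)=(U\cap Y_2^c)\cap A$. Your closing remark that the argument transfers verbatim to uniform codomains matches the paper's stated scope for this lemma.
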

 \begin{proof} 
 	Since finite union of closed subsets is closed one may reduce the proof to the case of two subsets. So, assume that $X=Y_1 \cup Y_2$ and $f|_{Y_1}\colon Y_1 \to M, f|_{Y_2}\colon Y_2 \to M$ are fragmented. Let $\eps >0$  and $A \subset X$ be a nonempty subset. We have to show that 
 	\begin{equation} \label{e:fr1} 
 	\exists O \in \tau \ \ \ O \cap A \neq \emptyset \ \text{and} \ \diam(f(O \cap A)) < \eps. 
 	\end{equation}

 	There are two cases:
 	(a) $A \subseteq Y_1 \cap Y_2$ and (b) $A \nsubseteq Y_1 \cap Y_2$. 
 	In the first case, using the fragmentability of $f|_{Y_1}$, choose $O \in \tau$ such that $(O \cap Y_1) \cap A \neq \emptyset$ and $\diam (f((O \cap Y_1) \cap A)) < \eps$. Since in case (a) we have $A \subset Y_1$, then $(O \cap Y_1) \cap A=O \cap A$. Hence, the condition \ref{e:fr1} is satisfied. 
 	\sk 
 	Now consider (b) $A \nsubseteq Y_1 \cap Y_2$. Then $(A \cap Y_1) \setminus Y_2 \neq \emptyset$ or $(A \cap Y_2) \setminus Y_1 \neq \emptyset$. We will check only the first possibility (the second is similar). 
 	Using the fragmentability of $f|_{Y_1}$, choose for the subset $(A \cap Y_1) \setminus Y_2 \subset Y_1$  an open subset $U \in \tau$ in $X$ such that 
 	$$(U \cap Y_1) \cap ((A \cap Y_1) \setminus Y_2) \neq \emptyset$$ 
 	and $\diam (f((U \cap Y_1) \cap (A \cap Y_1) \setminus Y_2)) ) < \eps$.
 	Now observe that 
 	$$(U \cap Y_1) \cap ((A \cap Y_1) \setminus Y_2)=(U \cap Y_1) \cap (A \cap Y_2^c)=(U \cap Y_2^c) \cap A.$$
 	Then $O:=U \cap Y_2^c$ is the desired open subset in $X$. 
 	\end{proof}

 \sk 
 \section{Functions of bounded variation} 
 \label{s:BV}

\subsection{Functions on linearly ordered sets} 

\begin{defin} \label{d:linBV}  \cite{Me-Helly} 
%	\ben 
%	\item 
	Let $(X,\leq)$ be a linearly ordered set. 
	We say that a bounded function $f\colon  (X,\leq) \to \R$ has \textit{variation} $\Upsilon_{\leq}(f)$ not greater than $r$ %(notation: $f \in BV_r)$ 
	if 
	$$\sum_{i=1}^{n-1} |f(x_i)-f(x_{i+1})| \leq r$$ 
	for every choice of $x_1 \leq x_2 \leq \cdots \leq x_n$ in $X$.
%	\item 
%	For circularly ordered sets $(X,R)$ (instead of $(X,<)$) the definition is similar but we take \emph{cycles} $x_1, x_2, \cdots, x_n$ in $X$  %(Definition \ref{d:cycl}) 
%	and require that 
%	\begin{equation}  \label{eq2} 
%	\sum_{i=1}^{n} d(f(x_i),f(x_{i+1})) \leq r
%	\end{equation} 
%	where $x_{n+1}=x_1$.    
%	\een 

%	\sk 
%	The least upper bound of all such possible sums is the  {\it variation} of $f$; notation (both cases):   $\Upsilon(f)$. If  $\Upsilon(f) \leq r$ then we write $f \in BV_r(X)$. If $f(X) \subset [c,d]$ for some reals $c \leq d$ then we write also $f \in BV_r(X,[c,d])$. 
\end{defin}

%\sk 

The following was proved in \cite{Me-Helly} using the particular case of order-preserving maps and Jordan type decomposition for functions with BV. 
%Here I give a \textbf{direct} proof (motivation: proposing later a version for treelike spaces). 

\begin{thm} \label{t:BV} \cite{Me-Helly}
	 Let $(K,\leq)$ be a compact linearly ordered topological space (with its interval topology).  
	Every function $f\colon  K \to \R$ with bounded variation is fragmented. 
\end{thm}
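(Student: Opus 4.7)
The plan is to follow the classical route via a Jordan-type decomposition: write $f$ as a difference of two bounded order-preserving functions and verify fragmentability separately for that simpler class.

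Since $K$ is a compact LOTS it has a minimum $a_0$. Setting $V(x) := \Upsilon_{\leq}(f|_{[a_0,x]})$, a direct check gives that both $V$ and $V-f$ are bounded and non-decreasing $K \to \R$ (for $x \leq y$ one has $V(y)-V(x) \geq |f(y)-f(x)| \geq f(y)-f(x)$). Hence $f = V - (V-f)$ is a difference of two bounded order-preserving real-valued functions on $K$. The class of fragmented functions is closed under differences: given a nonempty $A$ and $\eps > 0$, apply fragmentability of the first summand to $A$ to produce an open $O_1$ with $\diam \leq \eps/2$ on $O_1 \cap A$, then apply fragmentability of the second to $A' := O_1 \cap A$ to produce $O_2$, and take $O := O_1 \cap O_2$; since $O \cap A = O_2 \cap A'$, both variations shrink. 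Thus the problem reduces to showing every bounded non-decreasing $g\colon K \to \R$ is fragmented.

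For the monotone case I verify the point-of-continuity property of Lemma \ref{l:fr}(1). Let $A \subseteq K$ be closed and nonempty. The set $A$ inherits a linear order from $K$, and the order topology of $(A,\leq)$ is coarser than the subspace topology inherited from $K$, because each generating one-sided ray $\{a \in A: a > b\}$ (with $b \in A$) is the restriction of the corresponding open ray in $K$. Consequently, every point where $g|_A$ is continuous in the order topology is also continuous in the subspace topology. On the other hand, $g|_A$ is a bounded monotone function on the linearly ordered set $(A,\leq)$, so its order-topology discontinuities give pairwise disjoint nondegenerate intervals of the bounded range and hence form a countable set. So the subspace-topology discontinuity set of $g|_A$ is countable. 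If $A$ is uncountable, this yields a continuity point immediately. If $A$ is countable, then the compact Hausdorff space $A$ has an isolated point by Cantor--Bendixson, and $g|_A$ is automatically continuous there. Either way $g|_A$ has a continuity point, which establishes fragmentability of $g$, and therefore of $f$.

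The main subtlety I anticipate is the interplay between the subspace topology and the induced order topology on a closed subset of a LOTS, since the former can be strictly finer. This is handled cleanly by the inclusion of topologies above, which transports the classical countable-discontinuities fact for monotone functions to the topology in which fragmentability is actually required. The Jordan decomposition step and the closure of fragmented functions under differences are essentially formal.
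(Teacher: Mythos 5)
Your proof is correct and follows exactly the route the paper attributes to \cite{Me-Helly}: a Jordan-type decomposition of a BV function into a difference of two bounded monotone functions, followed by fragmentability (point-of-continuity property) of bounded monotone maps. The supporting details you supply --- that the order topology of a closed $A\subseteq K$ is coarser than its subspace topology, that the jump intervals of a monotone function are pairwise disjoint and hence the discontinuity set is countable, and the Cantor--Bendixson argument for countable compacta --- are all sound.
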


\subsection{Functions on median algebras}

We examine two definitions (\ref{d:BVnew} and \ref{d:LinBV}) of BV for median algebras. Each of these definitions naturally generalize Definition \ref{d:linBV}.

	Let $(X,m)$ be a median algebra and $R$ be the induced betweenness relation (as in Remark \ref{r:med}.1), where, as before, we write $\lan a,x,b \ran$ instead of $(a,x,b) \in R$. 
	In particular, for dendrons it is exactly the standard betweenness relation. 
	 Recall that 
	$$
	\lan a,x,b \ran \Leftrightarrow x \in [a,b] \Leftrightarrow m(a,x,b)=x.
	$$
	
	Now, let $Y \subseteq X$ be a subset. A two-element subset (\textit{doublet}) $\{a,b\} \subset Y$ is said to be $Y$-\textit{adjacent} (or $Y$-\textit{gap}) if $\lan a,c,b \ran \Rightarrow c=a \ \text{or} \ c=b$ for every $c \in Y$. 
	In terms of intervals: 
	$[a,b]_X \cap Y =\{a,b\}$. 
	By $adj(Y)$ we denote the set of all $Y$-adjacent doublets. 

\sk

\begin{defin} \label{d:BVnew} 
	Let 
	$f\colon X \to \R$ be a bounded real valued function on a median algebra $(X,m)$ and $\s \subset X$ is a finite \textit{subalgebra}. By the \textit{variation} $\Upsilon (f,\s)$ of $\s$, we mean 
	
	\begin{equation} \label{nBV1}
	\Upsilon(f,\s) : =\sum_{\{a,b\} \in adj(\s)} |f(a)-f(b)|. 
	\end{equation}
	
	The least upper bound 
	$$
	\sup \{\Upsilon(f,\sigma): \ \s \ \text{is a finite subalgebra in} \ X\}
	$$
	%of all such possible sums $\Upsilon(f,\sigma)$, where $\s$ is a finite subalgebra, 
	is the  {\it variation} of $f$. Notation:  $\Upsilon(f)$. If it is bounded, say if  $\Upsilon(f) \leq r$ for a given positive $r \in \R$, then we write $f \in BV_r(X)$. If $f(X) \subset [c,d]$ for some $c \leq d$, then we write also $f \in BV_r(X,[c,d])$. One more notation: $BV(X):=\bigcup_{r>0} BV_r(X)$. 	 
	
\end{defin}

Note that $BV(X)$ is closed under linear operations. 

\sk 
Every linear subset in a median algebra is a subalgebra. So, Definition \ref{d:BVnew} naturally extends Definition \ref{d:linBV}. 
Another natural attempt for a generalization would be considering the sums $\Upsilon(f,\s)$ only for finite linear subsets $\s$  (and not for all finite subalgebras) as in the following definition.  
%More precisely, we have 

\begin{defin} \label{d:LinBV} In terms of Definition \ref{d:BVnew}, consider the least upper bound 
	$$
	\sup \{\Upsilon(f,\sigma): \ \s \ \text{is a finite linear subset in} \ X\}.
	$$
	%of all such possible sums $\Upsilon(f,\sigma)$, where $\s$ is a finite subalgebra, 
	Let us call it the  {\it linear variation} of $f$. Notation:  $\Upsilon^L(f)$. 
	Then $BV_r^L(X)$ and $BV^L(X)$ are understood like in Definition \ref{d:BVnew}. 	
\end{defin}

Since $\Upsilon^L(f) \leq \Upsilon(f)$ we get $BV(X) \subseteq BV^L(X)$. 
In general, this inclusion is proper for median pretrees. That is, $BV(X) \neq BV^L(X)$ (Example \ref{ex:BV}.3). 

 Every bounded monotone function $f\colon X \to \R$ on every median algebra $X$ belongs to $BV_r^L(X)$, with $r=\diam (f(X))$, because the restriction of $f$ on a linear subset with a direction is order preserving or order reversing. 
% So, $M(X) \subset BV^L(X)$. 
 In fact, even $f \in BV(X)$ %$M(X) \subset BV(X)$ 
 if $X$ is a median pretree (Corollary \ref{c:MONOT}.2). 
 It is not true, in general, for median algebras (Example \ref{ex:BV}.4). 
% Monotone functions on the median algebra $(X,m)$ are, of course, in $BV^L$ but not necessarily in $BV$; 

 Directly from the definitions, we have $\Upsilon(f|_Y) \leq \Upsilon(f)$ and $\Upsilon^L(f|_Y) \leq \Upsilon^L(f)$ for every median algebra $X$, its subalgebra $Y$ and a function 
 $f\colon X \to \R$.

\begin{remarks} \ 
	\begin{enumerate}
		%3006 
		\item In \cite{FJ} the authors study a treelike system -- ``rooted nonmetric tree". In paragraph 7.4 they define functions of bounded variation on such objects. This definition essentially differs from our definition. 
%		I am not aware of any additional definition of the variation for functions on median algebras or on  median pretrees.
		\item 
		In this article, we examine Definition \ref{d:BVnew} mainly in the case when $X$ is a median pretree. 
		 Note that for functions on multidimensional objects (subsets of $\R^n$) there are several definitions for BV functions (see, for example, Vitali-Hardy-Krause type variation in  \cite{BEU,Leonov,Chist} and references therein). Such definitions and ideas probably would be useful also for abstract median algebras or for \textit{metric median spaces} with \textit{finite rank} in the sense of \cite{B-medmetr}.

%		Theorems A and B remain true for some natural subclasses of topological median algebras under reasonable assumptions. % (one of them: the local convexity).   
	%	\item 

	\end{enumerate}

\end{remarks}

%!!!!!!
%It is unclear however, whether analogs of  
%Theorems A and B remain true for $BV^L(X)$ in the case of median algebras. 
%%That is, if $BV^L(X) \subset \F(X)$. 

%\sk 
%\subsection{Functions on median pretrees}

Sometimes, we use the following relative version of Definition \ref{d:BVnew}. 

\begin{defin} \label{d:BVnewRel} 
	Let $S \subset X$ be a subset of a median algebra $X$ and $P(S)$ is the power set. 
 By an \textit{$S$-variation} $\Upsilon (f,\s)$ of $\s$ on $S$, we mean 
 \begin{equation} \label{nBV1Rel}
 \Upsilon(f,\s)|_S : =\sum_{\{a,b\} \in adj(\s) \cap P(S)} |f(a)-f(b)|. 
 \end{equation} 
 %Here, as usual, $P(S):=\{A: A \subseteq S\}$. 
 
%Similarly can be defined the variation of $f$ on $S \subset X$ 

The variation of f on $S \subset X$ can be defined similarly 
which we denote by $\Upsilon(f)|_S$.  
\end{defin}

\sk 
Clearly, $\Upsilon(f,\s)|_S \leq \Upsilon(f,\s)$ and $\Upsilon(f)|_S \leq \Upsilon(f)$ for every $S \subset X$. 

Let us say that the sets $A$ and $B$ are \textit{almost disjoint} if $A \cap B$ is at most a singleton. 

\sk 
\begin{lem} \label{l:ineq} 
	Let $\s$ be a finite subalgebra in a median algebra $X$. 
	\begin{enumerate}
		\item For every almost disjoint subsets $S_1, S_2$ in $X$, we have 
		$$
		\Upsilon(f,\s) \geq \Upsilon(f,\s)|_{S_1} + \Upsilon(f,\s)|_{S_2}.
		$$
			\item $\Upsilon(f,\s)|_{S} \leq \Upsilon(f,\s \cap S)$ for every subalgebra $S \subset X$. 
				\item $\Upsilon(f,\s)|_{C} = \Upsilon(f,\s \cap C)$ for every convex subset $C \subset X$. 
				\item $\Upsilon(f,\s) \geq  \Upsilon(f,\s \cap C_1) + \Upsilon(f,\s \cap C_2)$ for every almost disjoint convex subsets $C_1, C_2$ of $X$. 
	\end{enumerate}
\end{lem}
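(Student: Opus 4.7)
The plan is to treat the four items as bookkeeping statements about the family $adj(\s)$ of $\s$-adjacent doublets, with only one genuinely geometric ingredient (used in part (3)) and everything else elementary disjointness. Throughout, all contributing terms $|f(a)-f(b)|$ are nonnegative, so sub-sums are always dominated by the full sum.

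For (1), I would observe that any doublet $\{a,b\}$ contributing to both $\Upsilon(f,\s)|_{S_1}$ and $\Upsilon(f,\s)|_{S_2}$ would have to satisfy $\{a,b\}\subset S_1\cap S_2$, which is impossible since $|S_1\cap S_2|\leq 1$ by almost disjointness. Hence $adj(\s)\cap P(S_1)$ and $adj(\s)\cap P(S_2)$ are disjoint subfamilies of $adj(\s)$, and nonnegativity of all summands gives the inequality at once.

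For (2), first I note that $\s\cap S$ is a finite subalgebra (as an intersection of two subalgebras). Given $\{a,b\}\in adj(\s)\cap P(S)$, I would show $\{a,b\}\in adj(\s\cap S)$: any $c\in\s\cap S$ with $c\in[a,b]$ lies a fortiori in $\s$, so $\s$-adjacency of $\{a,b\}$ forces $c\in\{a,b\}$. Thus $\Upsilon(f,\s)|_S$ is a sum over a subfamily of $adj(\s\cap S)$, which gives the inequality.

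The crucial step is (3), where convexity enters. The direction $\leq$ is (2) applied with $S=C$. For $\geq$, suppose $\{a,b\}\in adj(\s\cap C)$ and let $c\in\s$ with $c\in[a,b]$. Convexity of $C$ yields $[a,b]\subset C$, whence $c\in\s\cap C$, and $(\s\cap C)$-adjacency then forces $c\in\{a,b\}$. Therefore $adj(\s\cap C)=adj(\s)\cap P(C)$, and the two variations coincide. Part (4) then falls out by combining (1) applied to $S_i=C_i$ with (3) applied to each $C_i$, rewriting $\Upsilon(f,\s)|_{C_i}=\Upsilon(f,\s\cap C_i)$.

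The only place where a nontrivial property of the median structure is used is (3), via the one-line fact that an interval lies entirely inside any convex set containing its endpoints; this is what guarantees that a would-be $\s$-separator of $\{a,b\}$ inside $C$ is automatically detected by $\s\cap C$. This is the ``main obstacle,'' although it is essentially immediate from the definitions. The rest is routine disjointness and positivity of the summands.
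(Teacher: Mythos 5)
Your proof is correct and follows essentially the same route as the paper's: part (1) by disjointness of the two families of doublets, part (2) by noting that an $\s$-adjacent doublet inside $S$ remains $(\s\cap S)$-adjacent, part (3) by using convexity to show any $\s$-separator of an $(\s\cap C)$-adjacent pair already lies in $\s\cap C$, and part (4) by combining (1) and (3). No issues.
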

\begin{proof}
	(1) Trivial.
	
	(2) $\s \cap S$ is a finite subalgebra of $X$. Hence, $\Upsilon(f,\s \cap S)$ is well defined.  If $\{a,b\} \in adj(\s) \cap P(S)$, then $\{a,b\} \in adj(\s \cap S) $. 
	
	(3) By (2) it is enough to show the inequality $\Upsilon(f,\s)|_{C} \geq \Upsilon(f,\s \cap C)$. It suffices to prove that if $\{a,b\} \in adj(\s \cap C) $ then $\{a,b\} \in adj(\s)$. Assuming the contrary, let $\lan a,x,b \ran$ for some $x \in \s$ with $x \notin \{a,b\}$. Then $x \in [a,b] \setminus \{a,b\} \subset C$ by the convexity of $C$ and we get $\{a,b\} \notin adj(\s \cap C)$, a contradiction. 
	
	(4) Combine (1) and (3). 
	\end{proof} 

\begin{examples} \label{ex:BV} \  
	\ben 

	\item For a linearly ordered set $(X,\leq)$, consider the induced pretree with the median
	$$
	m(x,y,z)=y \Leftrightarrow x\leq y \leq z \vee z \leq y \leq x.
	$$
	Then $\Upsilon_{\leq} (f)=\Upsilon^L(f)=\Upsilon(f)$. So, in this case, Definitions \ref{d:linBV}, \ref{d:LinBV}, \ref{d:BVnew} agree. 
	
%	\item 
% \label{l:nBVonIntervals} 
%		Let $D$ be a dendron with its induced median structure and $f: D \to [0,1]$ be BV function. Then for every arc $[u,v]$ the restriction $f: [u,v] \to [0,1]$ has BV in the \textbf{usual sense} of Definition \ref{d:BV}. Indeed, 
%		the induced B-relation on $[u,v]$ is the linear betweenness relation. 
%%	\item 
%	
%	%	\item If $(X,R)$ is a circular order then we get just definition via cycles as in \cite{GM-c}. 
	\item Let $X=\{a,b,c,m\}$ be the ``4-element triod", where $m=m(a,b,c)$ is the only ``nontrivial median". Then for every $f\colon X \to \R$, we have 
	$$
	\Upsilon(f)=|f(a)-f(m)|+|f(b)-f(m)|+|f(c)-f(m)| 
	$$
	and $\Upsilon^L(f)$ is the maximum between three quantities: 
	
	$|f(a)-f(m)|+|f(b)-f(m)|$, \ \ \ $|f(b)-f(m)|+|f(c)-f(m)|$ 
	
	\nt and $|f(a)-f(m)|+|f(c)-f(m)|$. 
	So, in general, $\Upsilon^L(f) < \Upsilon(f).$
%	$$
%	\Upsilon^L(f)= sup\{|f(a)-f(m)|+|f(b)-f(m)|,|f(a)-f(m)|+|f(c)-f(m)|,|f(b)-f(m)|+|f(c)-f(m)| \}. 
%	$$
	\item 
	Let $X:=\{0, 1, \cdots\} = \{0\} \cup \N$. Define the following betweenness relation on $X$:  
	%$R$ as follows  
	$$\lan x,0,y\ran \ \  \forall x \neq y \ \ \forall x,y \in \N \ \ \text{and} \  \ \lan x,x,y \ran, \ \lan x,y,y \ran \ \ \forall x, y \in X.$$ 
Then we get a pretree with the median 
	$$m\colon X^3 \to X, \ m(x,y,z)=0 \ \ \ \forall x\neq y\neq z \neq x,$$
	and $m(x,x,y)=m(y,x,x)=m(x,y,x)=x \ \ \forall x, y \in X$.
	The intervals are $[x,y]=[y,x]=\{x,0,y\} \ \ \forall x \neq y$ from $\N$, $[x,0]=[0,x] = \{x,0\}$ for every $x \in \N$ (and of course, $[x,x]=\{x\}$ \ $\forall x \in X$). 
	The corresponding shadow topology $\tau_s$ is the Alexandrov compactification of the discrete space $\N$ adjoining the limit point $0$. 
	
	\begin{itemize}
		\item [(a)] $BV(X) \neq BV^L(X)$. 
		\sk 
		Define the characteristic function of the singleton $\{0\}$  
		$$f \colon X \to \R, \ f(x)=0 \ \forall x \neq 0, f(0)=1.$$ 
		Then $\Upsilon(f,\s_n)=n$ for every subalgebra $\s_n=\{0,1, \cdots,n\}$. Hence, $\Upsilon(f)=\infty$. In contrast, the \textit{linear variation} is bounded, $\Upsilon^L(f)=2$.   
		
		\sk 
		\item [(b)] 
		The analog of Jordan's decomposition for the variations in Definitions \ref{d:BVnew}, \ref{d:LinBV} is not true for compact median pretrees. % concept in the sense of Definition \ref{d:BVnew}. 
		\sk 
		
		Indeed, observe that monotone functions $\varphi \colon X \to \R$ have a very special form. 
		Namely, there exists a finite subset $F$ (with at most two elements) of $\N$ such that $\varphi(\N \setminus F)=\varphi(0)$. Now, define 
		$$f\colon X \to \R, \ \ f(0)=0, \ f(n)=\frac{1}{2^n} \ \ \forall n \in \N.$$
		Then $f \in BV(X) \subset BV^L(X)$ and $f$ is not a difference of any two monotone functions on $X$. 
	\end{itemize}

	\item Let $X=[0,1] \times [0,1]$ be the square with the $l_1$-metric $d_1$. Then $(X,d_1)$ is a \textit{metric median space}, \cite{B-medmetr}. It gives a median algebra $(X,m)$. 
	%!!!! In this case analog of Jordan's decomposition remains true. On the other hand, 
	
	Monotone functions on this median algebra $(X,m)$ are, of course, in $BV^L$ but not necessarily in $BV$. Indeed, this happens, for example, for the characteristic function $f=\chi_{[\frac{1}{2},1] \times [0,1]}$ of the subset $[\frac{1}{2},1] \times [0,1]$ of $X$.  
	%!!! Moreover, using results of ..... one may show that $BV^L(X) \subset B_1(X)$. 
%	
%	\begin{itemize}
%		\item [(a)] monotone functions on the median algebra $(X,m)$ are not necessarily of BV. Indeed, this happens, for example, for the characteristic function $f=\chi_{[\frac{1}{2},1] \times [0,1]}$ of the subset $[\frac{1}{2},1] \times [0,1]$ of $X$.  
%		%!!! Moreover, using results of ..... one may show that $BV^L(X) \subset B_1(X)$. 
%		
%		
%		\item [(b)] There exists a monotone function $f: X \to [0,1]$ such that $f$ is not fragmented (or, Baire 1). This follows from the following observation: consider the diagonal $D \colon=\{(x,1-x): 0 \leq x \leq 1\}$. Suppose that $f(x,y)=0$ for every $y < 1-x$ and $f(x,y)=1$ for every $y > 1-x$. Then $f$ is monotone. Note that on $D$ we have a full freedom.  
%		\item [(c)] Analog of Helly's selection theorem is not true for $(X,m,d)$. Indeed, use (b) and the fact that $\{0,1\}^{[0,1]}$ is not sequentially compact. 
%	\end{itemize}
	
	\een
\end{examples}

\sk 

\begin{prop} \label{p:factor}  
	Let $X$ and $Y$ be median pretrees, $f\colon Y \to \R$ be a bounded function and $h\colon X \to Y$ be a monotone map  
	\begin{equation*}
	\xymatrix { X \ar[dr]_{f \circ h} \ar[r]^{h} & Y
		\ar[d]^{f} \\
		& \R }
	\end{equation*}
	Suppose that $\s_1$ is a finite subalgebra in $X$ 
	and $\s_2$ is a finite subalgebra in $Y$ such that 
	$h(\s_1) \subset \s_2$. % \subset co ((h(\s_1))$. 
	Then we have 
	%subalgebras $\s_1,\s_2$ of $X$ with $\s_1 \subseteq \s_2$ 
	$$
	\Upsilon (f \circ h,\s_1) \leq   \Upsilon (f,\s_2) \ \ \ \text{and} \ \ \  \Upsilon(f \circ h) \leq  \Upsilon(f).
	$$
	%\item $\Upsilon(f \circ h) \leq  \Upsilon(f).$
	
\end{prop}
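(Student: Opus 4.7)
My plan: for each $\{a, b\} \in adj(\sigma_1)$ with $h(a) \neq h(b)$, I will write $|f(h(a)) - f(h(b))|$ as a telescoping sum along a chain of adjacent pairs in $\sigma_2$, and then show that each adjacent pair of $\sigma_2$ is reused at most once when summing over $adj(\sigma_1)$.

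Fix $\{a, b\} \in adj(\sigma_1)$ with $h(a) \neq h(b)$. By Lemma \ref{l:PretrProp}(3), $[h(a), h(b)]_Y$ is a convex linear subset of $Y$, so $\sigma_2 \cap [h(a), h(b)]_Y$ is a finite linear subset; enumerate it in order as $h(a) = u_0, u_1, \ldots, u_n = h(b)$. Convexity of $[h(a), h(b)]_Y$ forces each consecutive pair $\{u_i, u_{i+1}\}$ to lie in $adj(\sigma_2)$: any $w \in \sigma_2$ strictly between $u_i$ and $u_{i+1}$ would also lie in $\sigma_2 \cap [h(a), h(b)]_Y$, contradicting consecutiveness. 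The triangle inequality then gives $|f(h(a)) - f(h(b))| \leq \sum_{i=0}^{n-1} |f(u_i) - f(u_{i+1})|$. Summing over $adj(\sigma_1)$ (pairs with $h(a) = h(b)$ contribute zero) and reversing the order of summation yields
\[
\Upsilon(f \circ h, \sigma_1) \;\leq\; \sum_{\{u, v\} \in adj(\sigma_2)} N(\{u, v\}) \, |f(u) - f(v)|,
\]
where $N(\{u, v\})$ counts the $\{a, b\} \in adj(\sigma_1)$ such that $\{u, v\} \subset [h(a), h(b)]_Y$.

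The main obstacle is the combinatorial estimate $N(\{u, v\}) \leq 1$. For this I use the shadows in $Y$: the sets $S^v_u$ and $S^u_v$ are convex subalgebras of $Y$ (a quick computation using axioms (M1)--(M3)), and since $\{u, v\}$ is adjacent in $\sigma_2$, the restrictions $T_u := S^v_u \cap \sigma_2$ and $T_v := S^u_v \cap \sigma_2$ form a partition of $\sigma_2$ into two convex pieces. Monotonicity of $h$ (equivalent to preimages of convex sets being convex, Remark \ref{r:med}(2)) then makes $A := h^{-1}(S^v_u) \cap \sigma_1$ and $B := h^{-1}(S^u_v) \cap \sigma_1$ a partition of $\sigma_1$ into two convex subsets, and an adjacent pair $\{a, b\} \in adj(\sigma_1)$ contributes $\{u, v\}$ to its chain iff its endpoints lie on opposite sides of this partition. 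Thus $N(\{u, v\})$ equals the number of adjacent pairs of $\sigma_1$ crossing the partition $(A, B)$; a standard property of finite median pretrees (viewed as finite trees whose edges are their adjacent pairs) is that at most one adjacent pair crosses a partition into two non-empty convex subsets. We therefore obtain $\Upsilon(f \circ h, \sigma_1) \leq \Upsilon(f, \sigma_2)$, and the unrestricted inequality $\Upsilon(f \circ h) \leq \Upsilon(f)$ follows by applying this to $\sigma_2 := h(\sigma_1)$, which is a finite subalgebra of $Y$ since $h$ is median-preserving.
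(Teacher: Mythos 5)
Your proof is correct, and while its first half (telescoping $|f(h(a))-f(h(b))|$ along the consecutive points of $\s_2\cap[h(a),h(b)]_Y$ and noting that consecutive pairs are $\s_2$-adjacent by convexity of intervals) coincides with the paper's argument, your treatment of the multiplicity bound $N(\{u,v\})\le 1$ is genuinely different. The paper instead proves that the intervals $[h(s_1),h(t_1)]_Y$ and $[h(s_2),h(t_2)]_Y$ attached to two distinct $\s_1$-adjacent doublets are almost disjoint: using that $\s_1$ is a subalgebra it shows the four points $s_1,t_1,s_2,t_2$ form a linear subset, transports the resulting order to $Y$ by monotonicity, and computes the intersection via Lemma \ref{l:PretrProp}.1. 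Your route is dual and more global: to each $\s_2$-adjacent doublet $\{u,v\}$ you attach the convex bipartition of $\s_1$ obtained by pulling back the shadows $S^v_u,S^u_v$, and observe that at most one $\s_1$-adjacent doublet can cross such a bipartition. This gives a cleaner ``edges correspond to splits'' picture and avoids the order case analysis, at the price of two auxiliary facts you assert rather than prove. (i) Convexity of $S^v_u=\{x: m(x,u,v)=u\}$: this is indeed a short (M1)--(M3) computation, but note that the paper only records convexity of branches (the \emph{complements} of shadows), so it should be written out. (ii) The crossing bound itself, which is the crux; it is true, and rather than invoking the tree realization of finite median pretrees you can make it self-contained in a few lines: if $\{a_1,b_1\},\{a_2,b_2\}\in adj(\s_1)$ both cross $(A,B)$ with $a_i\in A$, $b_i\in B$, then $m(a_1,b_1,a_2)\in\{a_1,b_1\}$ by adjacency and cannot equal $b_1$ (else $b_1\in[a_1,a_2]\subseteq A$ by convexity of $A$), so $a_1\in[a_2,b_1]$; arguing symmetrically for the other three medians and placing all four points in the linear interval $[a_1,b_2]$ forces $a_1=a_2$ and $b_1=b_2$. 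With (i) and (ii) supplied your argument is complete, and the deduction of $\Upsilon(f\circ h)\le\Upsilon(f)$ via $\s_2:=h(\s_1)$ is fine.
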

\begin{proof} 
	It is enough to show $\Upsilon (f \circ h,\s_1) \leq  \Upsilon (f,\s_2)$. 
%	 $\Upsilon (f \circ h,\s_1) \leq  \Upsilon (f,\s_2)|_{co(h(\s_1))}$ in  (1) because the inequality $\Upsilon (f,\s_2)|_{co(h(\s_1))}   \leq  \Upsilon (f,\s_2)$ is trivial and (2) is a direct consequence of (1). 
%	%that $\Upsilon (f \circ h,\s_1) \leq \Upsilon (f,\s_2).$
%	%Then the second inequality $\Upsilon(f \circ h) \leq  \Upsilon(f)$ is a direct consequence. 

	Let $\{s,t\} \in adj(\s_1)$. Consider the interval $[h(s),h(t)]_{\s_2}$ which is finite (because $\s_2$ is finite). 
	 By Lemma \ref{l:PretrProp}.3 it is a linear subset. Let 
	 $$
	 [h(s),h(t)]_{\s_2}=\{h(s)=y_1, y_2, \cdots, y_{n-1}, y_n=h(t)\} 
	 $$
	be its list of distinct elements linearly ordered according to the direction where $h(s)$ is the smallest element. 
	It is possible that $\{h(s),h(t)\} \notin adj(\s_2)$ (i.e., $n>2$). 
	% In this case say, that $\{s,t\}$ is a \textit{singular} doublet.
	 
	  For every $i<j<k$ we have $\lan y_i,y_j,y_k \ran$. 
Say that the doublet $\{y_i,y_{i+1}\}$ (from $Y$) is \textit{$\{s,t\}$-linking}, where $1 \leq i \leq n-1$. 
Using Lemma \ref{l:PretrProp}.2, every 
$\{s,t\}$-linking 
doublet $\{y_i,y_{i+1}\}$ (where $1 \leq i \leq n-1$) is $\s_2$-adjacent. %$x,y \in [a,b] \Rightarrow [x,y] \subset [a,b]$	
	Clearly, 
	$$|(f \circ h)(s)-(f \circ h)(t)|=|f(h(s))-f(h(t))| \leq \sum_{i=1}^{n-1} |f(y_i)-f(y_{i+1})|.$$ 
	 % and 
	 %$$y_i \in [h(s),h(t)]_{\s_2} \subset [h(s),h(t)]_X \subset co(h(\s_1)) \ \ \ \forall i \in \{1,\cdots, n\}.$$ So, $\{y_i,y_{i+1}\} \in adj(\s_2) \cap P(co(h(\s_1)))$.  
	Now, in order to check $\Upsilon (f \circ h,\s_1) \leq  \Upsilon (f,\s_2)$, it is enough to verify that the $h$-images of two $\s_1$-adjacent doublets cannot contain common linking doublets. 
	For this it is enough to prove the following 
	
		\sk 
\nt 	\textbf{Claim:}  
	If $\{s_1,t_1\} \in adj(\s_1)$ and $\{s_2,t_2\} \in adj(\s_1)$ then $[h(s_1),h(t_1)]_Y$ and  $[h(s_2),h(t_2)]_Y$ are almost disjoint.  %is at most a singleton. 
	\sk 
	\begin{proof}
	First of all note that the subset $S:=\{s_1,t_1,s_2,t_2\} \subset X$ is linear (in particular, a subalgebra of $\s_1$). 
	Indeed, $m(s_1,t_1,s_2) \in \{s_1,t_1\}$.  Otherwise, $\{s_1,t_1\}$ is not adjacent in the subalgebra $\s_1$.
	This implies that $s_1 \in [s_2,t_1] \vee t_1 \in [s_1,s_2]$. Therefore, $s_1,t_1,s_2$ are collinear in $X$. 
	 Similarly, for any other triple from $S$. 
	 %3006 !  In particular we get that $S$ is a subalgebra of $\s_1$. 
	 Choose one of the two possible compatible directions (linear orders) $\leq$ on $S$. 
		 
The function $h\colon X \to Y$ is monotone means that $h$ preserves the betweenness relation. Equivalently, $h([x,y]) \subset [h(x),h(y)]$. Therefore, $h$ preserves the collinearity of every triple in $S$. It follows that $h(S)$ is also a linear subpretree (in $Y$). 
%3006  !!!!!!!! 
Fix a linear order $\preccurlyeq$ on $h(S)$ which induces the linear betweenness. 

Without loss of generality, we can suppose that $s_1< t_1 \leq s_2 <t_2$ in $S$. Then $h(s_1) \preccurlyeq h(t_1) \preccurlyeq h(s_2) \preccurlyeq h(t_2)$ or
 $h(t_2) \preccurlyeq h(s_2) \preccurlyeq h(t_1) \preccurlyeq  h(s_1)$. Otherwise, $h$ is not monotone. 
 We provide the verification only for the first case because the second case is similar. So, let
 \begin{equation} \label{eq:inc0} 
h(s_1) \preccurlyeq h(t_1) \preccurlyeq h(s_2) \preccurlyeq h(t_2).
 \end{equation} 
 In order to prove the \textbf{Claim} (completing the proof of Proposition \ref{p:factor}), it is enough to check 
 \begin{equation} \label{eq:intersection} 
 [h(s_1),h(t_1)]_Y \cap [h(s_2),h(t_2)]_Y \subseteq \{h(t_1)\} \cap \{h(s_2)\}.
 \end{equation} 
% $$[h(s_1),h(t_1)]_{h(S)} \cap [h(s_2),h(t_2)]_{h(S)}$$ 
%This will show that this intersection 
%is empty or the singleton $h(t_1)=h(s_2)$.   
% 
The inclusion \ref{eq:intersection} is true by the following arguments. First of all, Equation (\ref{eq:inc0}) guarantees that $h(s_2) \in [h(t_1),h(t_2)]$. Lemma \ref{l:PretrProp}.1(b) implies that
\begin{equation} \label{eq:inc1} 
[h(s_2),h(t_2)]_Y \subseteq [h(t_1),h(t_2)]_Y.
\end{equation}
Since $ h(s_1) \preccurlyeq h(t_1) \preccurlyeq h(t_2)$, we have $h(t_1) \in [h(s_1),h(t_2)]_Y$. 
By Lemma \ref{l:PretrProp}.1(a), we obtain  
\begin{equation} \label{eq:inc2} 
[h(s_1),h(t_1)]_Y \cap [h(t_1),h(t_2)]_Y=\{h(t_1)\}. 
\end{equation}

Combining Equations (\ref{eq:inc1}) and (\ref{eq:inc2}), we have 
$$
[h(s_1),h(t_1)]_Y \cap [h(s_2),h(t_2)]_Y \subseteq [h(s_1),h(t_1)]_Y \cap [h(t_1),h(t_2)]_Y=\{h(t_1)\}.  
$$
Similarly, by Lemma \ref{l:PretrProp}.1 and Equation (\ref{eq:inc0}), we obtain  
$[h(s_1),h(t_1)]_Y \subseteq [h(s_1),h(s_2)]_Y$ and 
$[h(s_1),h(s_2)]_Y \cap [h(s_2),h(t_2)]_Y=\{h(s_2)\}.$ This implies 
\begin{equation} \label{eq:inc3}  
[h(s_1),h(t_1)]_Y \cap [h(s_2),h(t_2)]_Y \subseteq [h(s_1),h(s_2)]_Y \cap [h(s_2),h(t_2)]_Y=\{h(s_2)\}.
\end{equation}
Finally, Equations (\ref{eq:inc2}) and (\ref{eq:inc3}) establish (\ref{eq:intersection}).   
		\end{proof}    \end{proof}

\begin{cor}  \label{c:MONOT} 
	Let $X$ be a median pretree.  
\begin{enumerate} 
		\item 
For every pair of finite subalgebras $\s_1,\s_2$ of $X$ with $\s_1 \subseteq \s_2$ and every bounded function $f\colon X \to \R$, we have 
$
\Upsilon (f,\s_1)  \leq \Upsilon(f,\s_2).  %\leq \Upsilon(f,\s_2)|_{co(\s_1)}
$
\item 
Let $h\colon X \to [c,d] \subset \R$ be a monotone  bounded map on $X$. Then $h \in BV_r(X)$, where $r=d-c$. %Hence, $M(X) \subset BV(X)$. 
\end{enumerate}
\end{cor}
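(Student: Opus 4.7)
The plan is to derive both assertions directly from Proposition \ref{p:factor}, with very little additional work.

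For part (1), I would apply Proposition \ref{p:factor} with $Y = X$ and $h = \mathrm{id}_X \colon X \to X$. The identity is trivially monotone (it preserves betweenness on the nose), and the hypothesis $\s_1 \subseteq \s_2$ says exactly that $h(\s_1) \subseteq \s_2$. The proposition then gives
\[
\Upsilon(f,\s_1) \;=\; \Upsilon(f \circ \mathrm{id}, \s_1) \;\leq\; \Upsilon(f,\s_2),
\]
which is the claim.

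For part (2), the idea is to use $[c,d]$ itself as the target median pretree. Equip $[c,d]$ with its standard linear order; by Example \ref{ex:median}.2 this makes it a median pretree whose median is the middle element. Let $f\colon [c,d] \to \R$ denote the inclusion. Given any finite subalgebra $\s_1$ of $X$, set $\s_2 := h(\s_1)$. A crucial observation, which is the only thing needing a moment of care, is that any finite subset of a linearly ordered median pretree is automatically a subalgebra: the median of three elements is just the middle one, so $\s_2$ is closed under $m$. Moreover $h(\s_1) \subseteq \s_2$ trivially, so Proposition \ref{p:factor} applies and yields
\[
\Upsilon(h,\s_1) \;=\; \Upsilon(f \circ h, \s_1) \;\leq\; \Upsilon(f,\s_2).
\]

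It remains to bound $\Upsilon(f,\s_2)$ by $r = d-c$. Writing the elements of $\s_2$ as $y_1 < y_2 < \cdots < y_n$ in $[c,d]$, the $\s_2$-adjacent doublets are exactly the consecutive pairs $\{y_i, y_{i+1}\}$ (as in a linear order, $y_j$ lies between $y_i$ and $y_k$ for any $i < j < k$). Hence
\[
\Upsilon(f,\s_2) \;=\; \sum_{i=1}^{n-1} (y_{i+1} - y_i) \;=\; y_n - y_1 \;\leq\; d-c \;=\; r.
\]
Since $\s_1$ was an arbitrary finite subalgebra of $X$, we conclude $\Upsilon(h) \leq r$, i.e. $h \in BV_r(X)$.

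There is no real obstacle here: both parts are essentially one-line deductions from Proposition \ref{p:factor}. The only substantive point is recognising that finite subsets of a linear order are automatically median-closed, which lets us take $\s_2 = h(\s_1)$ in part (2) without having to enlarge it.
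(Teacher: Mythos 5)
Your proposal is correct and follows exactly the paper's argument, which consists of the single line ``Apply Proposition \ref{p:factor} for (1) the identity map $h=id_X$ and $f$, (2) the map $h\colon X\to[c,d]$ and the inclusion $f\colon[c,d]\hookrightarrow\R$.'' You have merely supplied the routine details the paper leaves implicit (taking $\s_2=h(\s_1)$, noting finite subsets of a linear order are subalgebras, and telescoping to get the bound $d-c$), all of which are correct.
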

\begin{proof}
	 Apply Proposition \ref{p:factor} for:  
	
	(1) the identity map $h=id_X$ and $f\colon X \to \R$. 
	
	(2) the map $h\colon X \to [c,d]$ and the inclusion map $f\colon [c,d] \hookrightarrow \R$. 
	\end{proof}

\sk 
\begin{example}
	If we allow in Definition \ref{d:BVnew} that the subset $\s_1$ of $X$ is not necessarily a subalgebra, then 
	the ``monotonicity law" $\Upsilon (f,\s_1) \leq \Upsilon(f,\s_2)$ is not true in general. 
	For example, take the 4-element triod $X=\{a,b,c,m\}$ (Example \ref{ex:BV}.2) and define the function 
	$$f\colon X \to [-1,1], \  f(a)=f(c)=f(m)=1, f(b)=0.$$
	Then for the subset $\s_1=\{a,b,c\}$ (which is not a subalgebra) and $\s_2=X$, we have $\Upsilon (f,\s_1)=2$ but $\Upsilon(f,\s_2)=1$. 
\end{example}

If $X$ is a median algebra and not necessarily median pretree, then the set $M(X)$ of all monotone %real valued 
maps $X \to \R$  is not necessarily a subset of $BV(X)$, as we see by Example~\ref{ex:BV}.4. 
  
%\begin{prop} \label{p:Linfactor}  
%	Let $f: X \to \R$ be a bounded function on a median algebra $X$. Suppose that  $\s_1$ and $\s_2$ are subalgebras in $X$ such that $\s_1 \subseteq \s_2$. Then 
%	$$
%	\Upsilon (f,\s_1) \leq   \Upsilon (f,\s_2)
%	\ \ \ \text{and} \ \ \  \Upsilon^L (f,\s_1) \leq   \Upsilon^L (f,\s_2).$$
%\end{prop}

\sk 

\begin{prop} \label{p:sum} 
	Let $C_1,C_2$ be convex almost disjoint subsets in a median pretree $X$. 
	 For every bounded function $f\colon X \to \R$, denote by $f|_{C_1}\colon C_1 \to \R$ and $f|_{C_2} \colon C_2 \to \R$ the restrictions. Then we have 
	$$
	\Upsilon (f) \geq \Upsilon (f|_{C_1})+\Upsilon (f|_{C_2}). 
	$$ 
\end{prop}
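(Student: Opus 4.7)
The plan is to reduce the statement to Lemma \ref{l:ineq}.4 by combining good finite test subalgebras on each $C_i$ into a single finite subalgebra of $X$.

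First, since $C_1$ and $C_2$ are convex in the median pretree $X$, each is a subalgebra, and its induced median structure agrees with the restriction of the median from $X$. In particular, a finite subalgebra of $C_i$ is also a finite subalgebra of $X$, and for any finite $\s_i \subseteq C_i$, the notion of $\s_i$-adjacency coincides whether computed in $X$ or in $C_i$; hence $\Upsilon(f, \s_i) = \Upsilon(f|_{C_i}, \s_i)$.

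Next, given $\eps > 0$, I would choose finite subalgebras $\s_i \subseteq C_i$ with
$$\Upsilon(f|_{C_i}, \s_i) \geq \min\{\Upsilon(f|_{C_i}), 1/\eps\} - \eps/2 \qquad (i=1,2),$$
which handles both the finite and infinite cases uniformly. Now form $\s := sp(\s_1 \cup \s_2)$, the subalgebra of $X$ generated by the finite set $\s_1 \cup \s_2$. By Remark \ref{r:med}.5 this $\s$ is finite.

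The core computation is then:
\begin{align*}
\Upsilon(f) \;\geq\; \Upsilon(f, \s) \;&\geq\; \Upsilon(f, \s \cap C_1) + \Upsilon(f, \s \cap C_2) \\
&\geq\; \Upsilon(f, \s_1) + \Upsilon(f, \s_2) \\
&=\; \Upsilon(f|_{C_1}, \s_1) + \Upsilon(f|_{C_2}, \s_2),
\end{align*}
where the second inequality is Lemma \ref{l:ineq}.4 applied to the almost disjoint convex pair $(C_1, C_2)$, and the third uses $\s_i \subseteq \s \cap C_i$ together with the monotonicity of $\Upsilon(f, \cdot)$ along inclusions of finite subalgebras (Corollary \ref{c:MONOT}.1). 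Letting $\eps \to 0$ yields $\Upsilon(f) \geq \Upsilon(f|_{C_1}) + \Upsilon(f|_{C_2})$, in both the finite and infinite cases.

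The only nontrivial step is checking that the two ingredients fit together cleanly: one must verify that $\s \cap C_i$ is a (finite) subalgebra of $X$ containing $\s_i$ so that Corollary \ref{c:MONOT}.1 applies, and that convexity of $C_i$ lets us freely identify subalgebras of $C_i$ with subalgebras of $X$ supported in $C_i$. Everything else is bookkeeping: Lemma \ref{l:ineq}.4 already provides the essential geometric content about how a subalgebra splits across almost disjoint convex pieces.
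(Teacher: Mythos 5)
Your proof is correct and follows essentially the same route as the paper: approximate $\Upsilon(f|_{C_i})$ by finite subalgebras $\s_i\subseteq C_i$, pass to the finite subalgebra $sp(\s_1\cup\s_2)$ via Remark \ref{r:med}.5, split its variation across the almost disjoint convex pieces by Lemma \ref{l:ineq}.4, and finish with monotonicity of $\Upsilon(f,\cdot)$ along inclusions of finite subalgebras. Your explicit check that adjacency (and hence $\Upsilon(f,\s_i)=\Upsilon(f|_{C_i},\s_i)$) is the same whether computed in $C_i$ or in $X$ is a point the paper leaves implicit.
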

\begin{proof} 
	Let $\s_1, \s_2$ be finite subalgebras in $X$ such that $\s_1 \subset C_1$, $\s_2 \subset C_2$.
	It is enough to show that there exists a finite subalgebra $\s^*$ in $X$ such that 
	$$\Upsilon(f,\s^*) \geq \Upsilon(f,\s_1) + \Upsilon(f,\s_2).$$
	
	 Consider the subalgebra $\s^*:=sp(\s_1 \cup \s_2)$ of $X$ which is \textit{finite} by Remark \ref{r:med}.5.  Then $\s_1^*:=\s^* \cap C_1$ and $\s_2^*:=\s^* \cap C_2$ are finite subalgebras in $C_1$ and $C_2$, respectively. 
	Clearly, $\s_1 \subset \s_1^*, \s_2 \subset \s_2^*$. By Lemma \ref{l:ineq} we have 
	$$
	\Upsilon(f,\s^*) \geq  \Upsilon(f,\s^* \cap C_1) + \Upsilon(f,\s^* \cap C_1)= 
	\Upsilon(f,\s_1^*) + \Upsilon(f,\s_2^*). 
	$$
	Proposition \ref{p:factor} guarantees that $\Upsilon(f,\s_1^*) \geq \Upsilon(f,\s_1), \Upsilon(f,\s_2^*) \geq \Upsilon(f,\s_2)$. So we get 
	$\Upsilon(f,\s^*) \geq \Upsilon(f,\s_1) + \Upsilon(f,\s_2)$, as desired. 
	\end{proof}

%\subsection{Main Thm} 

\begin{thm} \label{t:PretBV}  %(Main Theorem) 
	Let $X$ be a median pretree (e.g., dendron or a linearly ordered space) such that its %natural
	 shadow topology is compact or Polish. Then every function $f\colon X \to \R$  %Hausdorff 
	with bounded variation has the point of continuity property. It is equivalent to say that $f$ is fragmented (Baire 1 class function, if $X$ is Polish).  
	%That is, $BV_r(X,[c,d]) \subseteq \F(X).$

\end{thm}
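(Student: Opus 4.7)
The plan is to proceed by contradiction. Suppose $f\in BV(X)$ is not fragmented. By Lemma \ref{l:fr}(3) there exist a closed nonempty $Y\subseteq X$ and reals $\alpha<\beta$ such that $A:=Y\cap f^{-1}(-\infty,\alpha)$ and $B:=Y\cap f^{-1}(\beta,\infty)$ are both dense in $Y$. Since $A\cap B=\emptyset$, the set $Y$ has no isolated points and in particular is infinite. The strategy is to manufacture, for every $N\in\N$, a family of $N$ pairwise disjoint convex subsets of $X$ on each of which $f$ oscillates by more than $\beta-\alpha$; an iteration of Proposition \ref{p:sum} will then force $\Upsilon(f)>N(\beta-\alpha)$ for all $N$, contradicting $\Upsilon(f)<\infty$.

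The key geometric input is that the shadow topology is Hausdorff (Remark \ref{r:med}(3)) and \emph{locally convex}: by Lemma \ref{l:PretrProp}(4) each branch is convex, and since branches form a subbasis, finite intersections give a basis of convex open sets. I would pick a cluster point $y_\infty\in Y$ of $Y$ (available because $Y$ is infinite and $X$ is compact, or because $Y$ is perfect when $X$ is Polish) and inductively produce distinct points $y_n\in Y\setminus\{y_\infty\}$ together with pairwise disjoint convex open neighborhoods $V_n\ni y_n$. In the Polish case, take $y_n\to y_\infty$ and let $V_n$ be a convex open neighborhood of $y_n$ of diameter below $\tfrac{1}{3}\inf_{k\neq n}d(y_n,y_k)$, which is positive because $y_n\neq\lim_k y_k=y_\infty$. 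In the compact case, alternate between $y_\infty$ and a decreasing sequence of open neighborhoods $U_0\supseteq U_1\supseteq\cdots$ of it: at step $n$ choose $y_n\in(U_{n-1}\setminus\overline{U_n})\cap Y$ and use normality together with local convexity to obtain a convex open $V_n\ni y_n$ inside $U_{n-1}\setminus\overline{U_n}$.

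With the $V_n$'s in hand, density of $A,B$ in $Y$ supplies $a_n\in A\cap V_n$ and $b_n\in B\cap V_n$, and convexity of $V_n$ forces $[a_n,b_n]\subseteq V_n$, so the intervals $[a_n,b_n]$ are pairwise disjoint. Each $[a_n,b_n]$ is a convex subset of $X$, and its two-element subalgebra $\{a_n,b_n\}$ contributes a single adjacent pair of weight $|f(a_n)-f(b_n)|>\beta-\alpha$, whence $\Upsilon(f|_{[a_n,b_n]})>\beta-\alpha$. A routine inductive extension of Proposition \ref{p:sum} to $N$ pairwise almost disjoint convex sets --- formed by taking finite subalgebras $\sigma_n\subset[a_n,b_n]$, setting $\sigma^\ast:=sp(\bigcup_n\sigma_n)$ (finite by Remark \ref{r:med}(5)), and invoking Lemma \ref{l:ineq}(1),(3) together with Proposition \ref{p:factor} --- yields
\[
\Upsilon(f)\ \geq\ \sum_{n=1}^{N}\Upsilon(f|_{[a_n,b_n]})\ >\ N(\beta-\alpha)
\]
for every $N\in\N$, the desired contradiction. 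The equivalence with the point of continuity property follows from Lemma \ref{l:fr}(1), and the Baire~1 refinement in the Polish case from Lemma \ref{l:fr}(2).

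The main obstacle is the step of producing pairwise disjoint convex open neighborhoods of infinitely many distinct points in $Y$: it is painless in the Polish case via the metric, but in the compact Hausdorff (possibly non-metrizable) case it requires a careful inductive separation alternating between the chosen cluster point $y_\infty$ and its shrinking neighborhoods, and exploiting both normality of $X$ and local convexity of the shadow topology.
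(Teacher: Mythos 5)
Your proof is correct, but it takes a genuinely different route from the paper's. The paper fixes a finite subalgebra $\s_1$ with $\Upsilon(f)-\Upsilon(f,\s_1)<\beta-\alpha$ and splits into two cases according to whether the bad closed set $Y$ lies inside $co(\s_1)$: if it does, $co(\s_1)$ is a finite union of closed intervals, each a compact (or Polish) linearly ordered space on which $f$ has bounded variation, so the linear-order result (Theorem \ref{t:BV}) together with the finite-union lemma for fragmentability (Lemma \ref{l:FinUnion}) makes $f|_{co(\s_1)}$ fragmented, contradicting Lemma \ref{l:fr}.3; if it does not, a single convex open set disjoint from $co(\s_1)$ supplies one extra interval of oscillation $>\beta-\alpha$, and the two-set Proposition \ref{p:sum} pushes the variation above its supremum. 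You avoid the case split and the appeal to Theorem \ref{t:BV} and Lemma \ref{l:FinUnion} entirely: since $A$ and $B$ are disjoint and both dense, $Y$ is crowded, and Hausdorffness plus local convexity of the shadow topology produce, for every $N$, $N$ pairwise disjoint convex open sets meeting $Y$, hence $N$ disjoint intervals $[a_n,b_n]$ each contributing more than $\beta-\alpha$; an $N$-fold version of Proposition \ref{p:sum} (whose proof is verbatim the paper's, since the $N$-set analogue of Lemma \ref{l:ineq}.1 is equally trivial --- adjacent doublets contained in distinct almost disjoint sets are distinct) then gives $\Upsilon(f)>N(\beta-\alpha)$ for all $N$. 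What each approach buys: the paper's argument reuses the previously established linear-order theorem and needs only one extra oscillation site; yours is more self-contained, needing instead a routine disjoint-separation construction --- painless in the Polish case, and in the compact non-metrizable case obtainable more simply than you suggest (no normality needed: iterate Hausdorff separation of $y_n$ from a fixed $\omega$-accumulation point $y_\infty$ of $Y$ inside the previous neighborhood of $y_\infty$, then shrink to convex basic open sets). The remaining steps --- $\{a_n,b_n\}$ is a two-element subalgebra witnessing $\Upsilon(f|_{[a_n,b_n]})>\beta-\alpha$, and the final contradiction --- are sound.
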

\begin{proof}
	Let $f\colon X \to \R$ not satisfy the point of continuity property. That is, $f$ is not fragmented (Lemma \ref{l:fr}.1). Then 
	by Lemma \ref{l:fr}.3 %\cite[Lemma 3.7]{Dulst} 
	there exists a closed (necessarily infinite) subspace $Y \subset X$ and real numbers $\alpha < \beta$ such that 
	%the subsets 
	%$f^{-1}(-\infty,\alpha) \cap Y$ and $f^{-1} (\beta,\infty) \cap Y$ are dense in $Y$. That is, 
	\begin{equation} \label{e:dense} 
	cl(f^{-1}(-\infty,\alpha) \cap Y)=	cl(f^{-1} (\beta, \infty) \cap Y)=Y. 
	\end{equation}
	
	Assuming the contrary let $f\colon X \to \R$ have BV. By Definition \ref{d:BVnew}, there exists $r \in \R$ such that 
	$$
	\Upsilon(f)\:=\sup \{\Upsilon(f,\sigma): \ \s \ \text{is a finite subalgebra in} \ X \}=r. 
	$$ 
	Choose a finite subalgebra $\sigma_1 \subset X$ such that 
	$$r - \Upsilon(f,\sigma_1) < \beta -\alpha,$$
	where $$\Upsilon(f,\s_1)=\sum_{\{a,b\} \in adj(\s_1)} |f(a)-f(b)|.$$
	By Lemma \ref{l:PretrProp}.2, $co(\s_1)=\cup \{[c_i,c_j]: c_i, c_j \in \s_1\}$. 
		Since $\s_1$ is finite, by Remarks \ref{r:med}.4, its convex hull $co(\s_1)$ is closed (hence also compact (or, respectively,  Polish) in the subspace topology) in $X$. 
	We have to check two cases.

	\sk
\nt	\textbf{Case 1:} $Y \subseteq co(\s_1)$.  %=\cup \{[c_i,c_j]: c_i, c_j \in \s_1\}$. 
	\sk 
	%finite union of closed intervals (in $Y$). FINITE TREE. 
	
	In this case, by Lemma \ref{l:fr}.3 (for the compact (or, Polish) space $co(\s_1)$), we obtain that the restriction map $f|_{co(\s_1)} \colon co(\s_1) \to \R$ is \textit{not fragmented}. 
	
	By Corollary \ref{c:MONOT}, the variation of the restricted map  
	$\Upsilon (f|_{co(\s_1)}) \leq \Upsilon(f) \leq r$ is also bounded. On the convex subset $co(\s_1) \subset X$, the (median) pretree structure induces exactly the subspace topology by Lemma \ref{l:PretrProp}.5. 
	
	Every interval $[c_i,c_j]$ has a linear order by Lemma \ref{l:PretrProp}.3 such that two variations defined above are the same (Example \ref{ex:BV}.1).  
	By Theorem \ref{t:BV} every restriction $f|_{[c_i,c_j]}$ has BV. Each of the intervals $[c_i,c_j]$ is closed in the shadow topology (Remark \ref{r:med}.4)). Therefore, by Lemma \ref{l:FinUnion} we obtain that $f|_{co(\s_1)} \colon co(\s_1) \to \R$ is \textit{fragmented}.
	This contradiction shows that Case 1 is impossible. 

	\sk
\nt	\textbf{Case 2:} $Y \nsubseteq co(\s_1)$. 
	\sk

	Choose a point $y_0 \in Y$ such that $y_0 \notin co(\s_1)$. 	
% In this case consider the following $\tau_s$-open nbd of $y_0$ in $Y$ 
%	$$
%	U(y_0)=\{t \in Y|  \ [t,y_0] \cap \s_1 =\emptyset \}.
%	$$ 
Recall that $co(\s_1)$ is closed %(hence compact) 
in $X$. 
 %There exists an open nbd $U$ of $y_0$ in $X$ such that $U \cap  co(\s_1) = \emptyset$. 
 Every pretree is locally convex by Lemma \ref{l:PretrProp}.4.  
		Therefore, there exists an open neighborhood $O$ of $y_0$ in $X$ such that $O$ is convex (one may choose it as a finite intersection of branches) in $X$ and $O \cap  co(\s_1) = \emptyset$.

	Choose $u,v \in O$ such that $u \in f^{-1}(-\infty,\alpha) \cap Y$ and $v \in  
	f^{-1} (\beta, \infty) \cap Y$. Since $O$ is convex, we have $[u,v] \subset O$. 
Then $[u,v] \cap  co(\s_1) = \emptyset$. Since $[u,v]$ and $co(\s_1)$ are disjoint convex subsets in $X$, we can apply Proposition \ref{p:sum} which yields 
	$$
	\Upsilon(f)  \geq \Upsilon (f,\s_1) + |f(u)-f(v)|.  
	$$
	By our choice of $\s_1$ and $r$,  
	it follows that $r < \Upsilon(f)=r$. 
	%Hence, the variation of $f$ is greater than $r$. 
	This contradiction completes the proof.
\end{proof}

\sk

\begin{prop} \label{p:closed} Let $X$ be a median pretree. Then 
$BV_r(X,[c,d])$ is pointwise closed and hence a compact subset in $[c,d]^X$. 
\end{prop}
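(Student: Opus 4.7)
The plan is to show pointwise closedness directly, after which compactness follows from Tychonoff: $[c,d]^X$ is compact Hausdorff in the product (pointwise) topology, and a closed subspace of a compact space is compact.

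For the closedness, I would observe that the set $BV_r(X,[c,d])$ can be written as an intersection over finite subalgebras $\s \subset X$:
\begin{equation*}
BV_r(X,[c,d]) = \bigcap_{\s} \{g \in [c,d]^X : \Upsilon(g,\s) \leq r\},
\end{equation*}
because $\Upsilon(g) = \sup_{\s} \Upsilon(g,\s)$ by Definition \ref{d:BVnew}, so $\Upsilon(g) \leq r$ is equivalent to $\Upsilon(g,\s) \leq r$ for every finite subalgebra $\s$. For a fixed finite subalgebra $\s$, the map
\begin{equation*}
[c,d]^X \to \R, \quad g \mapsto \Upsilon(g,\s) = \sum_{\{a,b\} \in adj(\s)} |g(a)-g(b)|
\end{equation*}
is a finite sum of functions each of which depends continuously (in the pointwise topology on $[c,d]^X$) on the finitely many values $g(a), g(b)$ with $a,b \in \s$. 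Hence this map is continuous, so its sublevel set $\{g : \Upsilon(g,\s) \leq r\}$ is closed in $[c,d]^X$.

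Taking the intersection over all finite subalgebras $\s$ yields a closed subset, which is precisely $BV_r(X,[c,d])$. Since $[c,d]^X$ is compact by Tychonoff's theorem, this closed subset is compact, completing the proof. I do not anticipate any real obstacle here; the argument is essentially the standard lower semicontinuity of variation-type functionals under pointwise convergence, adapted to the median-algebra definition, and uses only that the defining quantity $\Upsilon(\cdot,\s)$ is a finite algebraic expression in finitely many coordinates.
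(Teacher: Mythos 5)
Your proof is correct and is essentially the paper's argument in different clothing: the paper verifies closedness by taking a pointwise convergent net and passing the bound $\Upsilon(\cdot,\s)\leq r$ to the limit for each finite subalgebra $\s$, which is exactly the observation that each $\Upsilon(\cdot,\s)$ is continuous in finitely many coordinates, packaged as you do into an intersection of closed sublevel sets. No issues.
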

\begin{proof}
	Let $\{f_i\}_{i \in I}$ be a net of functions in $BV_r(X,[c,d])$ such that $f \colon X \to [c,d]$ is its pointwise limit. For every finite subalgebra $\s$ of $X$ and every $i \in I$, we have %$\Upsilon (f_i,\s) \leq r$. 
	$$
	\Upsilon(f_i,\s) : =\sum_{\{a,b\} \in adj(\s)} |f_i(a)-f_i(b)| \leq r.
	$$
	   Since $f$ is the pointwise limit of  $\{f_i\}_{i \in I}$, we get 
	   $\lim |f_i(a)-f_i(b)|=\lim |f(a)-f(b)|$ for every given $\{a,b\} \in adj(\s)$. This implies that $\Upsilon(f,\s) \leq r$ for every finite subalgebra $\s$. Hence, $\Upsilon(f) \leq r$.
	\end{proof}

\sk 
\subsection{Generalized Helly's selection principle} 

Note that there exists a sequence of functions $\{f_n: [0,1] \to [0,1]\}_{n \in \N}$ without any pointwise convergence subsequence. Indeed, the compact space $[0,1]^{[0,1]}$ (and even $\{0,1\}^{[0,1]}$) is not sequentially compact. 

Recall the following classical result of Helly, \cite{Helly, Natanson}. 

\sk 
\sk 
%\begin{thm} \label{t:HellyClassic} 
\nt 	{\bf Helly's Selection Theorem}:  
\textit{For every sequence of functions $\{f_n \colon [a,b] \to [c,d]\}_{n \in \N}$ with total variation $\leq r$, there exists a pointwise convergent subsequence.} 
%	That is, $BV_r([a,b],[c,d])$ is sequentially compact.}
%\end{thm}
\sk 

This result remains true replacing $[a,b]$ by any abstract linearly ordered set as it was proved in \cite{Me-Helly}. 
Our Theorem \ref{t:PretBV} allows us to prove 
the following generalization.

\begin{thm} \label{GenHellyThm}\label{t:GenHelly}  (Generalized Helly's selection theorem) 
	Let $X$ be a Polish %(e.g., compact metrizable) 
	median pretree (e.g., dendrite) and $\{f_n \colon X \to [c,d]\}_{n \in \N}$ be a sequence of real functions which has total bounded variation $\leq r$. 
	 Then there exists a pointwise converging subsequence which converges to a function with variation $\leq r$.  That is, $BV_r(X,[c,d])$ is sequentially compact. 
\end{thm}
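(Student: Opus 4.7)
The plan is to combine the three pieces already assembled in the paper: Theorem~\ref{t:PretBV} (every BV function on a Polish median pretree is Baire~1), Proposition~\ref{p:closed} (pointwise compactness of $BV_r(X,[c,d])$), and the Bourgain--Fremlin--Talagrand theorem recalled in Lemma~\ref{l:fr}.4 (every pointwise compact subset of $B_1(X)$ is sequentially compact for Polish $X$).

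First I would observe that $BV_r(X,[c,d])$ sits inside the compact product space $[c,d]^X$, and by Proposition~\ref{p:closed} it is pointwise closed there, hence pointwise compact. Next, since $X$ is Polish, Theorem~\ref{t:PretBV} together with Lemma~\ref{l:fr}.2 gives $BV_r(X,[c,d]) \subseteq B_1(X)$, so the topological inclusion
\[
BV_r(X,[c,d]) \hookrightarrow B_1(X) \subset \R^X
\]
realizes $BV_r(X,[c,d])$ as a pointwise compact subset of $B_1(X)$. By Lemma~\ref{l:fr}.4 every such subset is sequentially compact. Therefore, given any sequence $\{f_n\}_{n\in\N}$ in $BV_r(X,[c,d])$, one extracts a pointwise convergent subsequence $\{f_{n_k}\}$ with pointwise limit some $f \colon X \to [c,d]$.

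Finally, I would invoke Proposition~\ref{p:closed} once more: since $BV_r(X,[c,d])$ is pointwise closed in $[c,d]^X$ and $f$ is the pointwise limit of elements of this set, we conclude $f \in BV_r(X,[c,d])$, i.e.\ $\Upsilon(f)\le r$. This yields both the convergence of a subsequence and the variation bound on the limit, proving the sequential compactness of $BV_r(X,[c,d])$.

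There is essentially no obstacle beyond verifying that all hypotheses line up: the real content has been paid for in Theorem~\ref{t:PretBV} (the fragmentability/Baire~1 conclusion for BV functions) and Proposition~\ref{p:closed} (closedness under pointwise limits), while Lemma~\ref{l:fr}.4 does the sequential-extraction work. The only mild subtlety worth highlighting is that sequential compactness really does require the Baire~1 hypothesis—without Theorem~\ref{t:PretBV}, pointwise compactness of $BV_r(X,[c,d])$ in $[c,d]^X$ alone would not suffice, as illustrated by the reminder before the theorem that $[0,1]^{[0,1]}$ is not sequentially compact.
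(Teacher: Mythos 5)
Your proposal is correct and follows exactly the paper's own argument: Theorem~\ref{t:PretBV} plus Lemma~\ref{l:fr}.2 place $BV_r(X,[c,d])$ inside $B_1(X)$, Proposition~\ref{p:closed} gives pointwise compactness (and membership of the limit in $BV_r(X,[c,d])$), and the Bourgain--Fremlin--Talagrand theorem (Lemma~\ref{l:fr}.4) yields sequential compactness. No gaps; this is the same proof.
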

\begin{proof}
	By Theorem \ref{t:PretBV} the set $BV_r(X,[c,d])$ is a subset of $\F(X)$. Since $X$ is Polish we have  $\F(X)=B_1(X)$ (Lemma \ref{l:fr}.2). At the same time,  $BV_r(X,[c,d])$ is compact (by Proposition \ref{p:closed}). It is well known that by the  Bourgain--Fremlin--Talagrand theorem (Lemma \ref{l:fr}.4) for every Polish $X$ every pointwise compact subset of $B_1(X)$ is sequentially compact. Hence,   $BV_r(X,[c,d])$ is sequentially compact. 
\end{proof}

\sk 

\begin{remark}
	%By Lemma \ref{l:monotImpliesBV} every monotone function $f: D \to \R$ on a dendrone has BV.  
	There are many natural BV functions on dendrites which are not monotone. For example, consider the real triod $X=[u,v] \cup [v,w] \cup [u,w]  \subset \R^2$, where $[u,v] \cap [v,w] \cap [u,w]=\{m\}$.
	%$\{ \textcolor{blue}{b},\textcolor{green}{g}, \textcolor{red}{r}\}$ 
	Every ``coloring" $f\colon X \to \{1,2,3\}$, provided that every ``open arc" $(x,y)$ is monochromatic, is a function with BV. 
	Much more generally, $f$ is with BV if and only if every of three restrictions on the corresponding intervals are BV functions. 
	However, many such functions are not monotone. For example, if we use all three colors and if $f(m) \neq 2$, then $f$  is not monotone.  
\end{remark}

\bibliographystyle{amsplain}

\end{document}